\documentclass[10pt,twoside,a4paper]{amsart}

\usepackage[english]{babel}
\usepackage[utf8]{inputenc}

\usepackage{amsfonts,amsthm,amssymb,amsmath,amsthm,latexsym,wasysym,stmaryrd,mathrsfs,dsfont,txfonts,xcolor,picins}
\usepackage{accents,multirow,rotating,subfigure,graphicx,bigstrut,lscape,multicol,enumerate,accents,mathtools,exscale}

\usepackage[normalem]{ulem}

\usepackage{pdftricks}
\usepackage{color}
\usepackage[pdftex,all]{xy}

\usepackage{hyperref}
\usepackage{appendix}
\usepackage[justification=centering]{caption}

\graphicspath{{Figures/}}

\numberwithin{equation}{section}

\theoremstyle{theorem}
\newtheorem {thmintro}{Theorem}
\newtheorem {corintro}{Corollary}
\newtheorem {thm}{Theorem}[section]
\newtheorem*{thm*}{Theorem}
\newtheorem {lem}[thm]{Lemma}
\newtheorem*{lem*}{Lemma}

\newtheorem*{prop*}{Proposition}
\newtheorem {cor}[thm]{Corollary}
\newtheorem*{cor*}{Corollary}

\newtheorem*{cor_proof*}{Corollary (of the proof)}

\newtheorem*{conjecture*}{Conjecture}
\theoremstyle{definition}
\newtheorem {defi}[thm]{Definition}
\newtheorem*{defi*}{Definition}

\newtheorem*{nota*}{Notation}
\theoremstyle{remark}
\newtheorem {rem}[thm]{Remark}
\newtheorem*{rem*}{Remark}

\newtheorem*{warning*}{Warning}

\newtheorem {convention}[thm]{Convention}
\newtheorem*{convention*}{Convention}

\newtheorem*{ex*}{Example}

\newtheorem*{question*}{Question}

\newtheorem*{questions*}{Questions}
\newtheorem {fact}[thm]{Fact}
\newtheorem*{fact*}{Fact}

\def\ov{\overline}
\def\1{\mathbf 1}

\def\C{{\mathcal C}}

\def\Z{{\mathds Z}}

\def\2Z{{\fract{\Z}{2\Z}}}

\newcommand{\fract}[2]{\hbox{\leavevmode 
\kern.1em \raise .25ex \hbox{\the\scriptfont0 $#1$}\kern-.1em }\big/
  {\hbox{\kern-.15em \lower .5ex \hbox{\the\scriptfont0 $#2$}} }}
\newcommand{\subfract}[2]{\hbox{\leavevmode
  \kern0em \raise .25ex \hbox{\the\scriptfont0 \tiny $#1$}\kern-.1em }/
  {\hbox{\kern-.15em \lower .5ex \hbox{\the\scriptfont0 \tiny $#2$}} }}

\newcommand{\dessin}[2]{
  \vcenter{\hbox{\includegraphics[height=#1]{#2.pdf}}}}

\definecolor{pink}{rgb}{0.858, 0.188, 0.478}
\definecolor{orange}{rgb}{1, 0.647, 0}
\definecolor{vert}{rgb}{0.42, 0.557, 0.137}

\newcommand{\AI}{\hspace{0pt}\kern0pt}

\begin{document} 

\title{Higher order unknotting numbers for knots and links} 
\author[J.B. Meilhan]{Jean-Baptiste Meilhan} 
\address{Univ. Grenoble Alpes, CNRS, Institut Fourier, F-38000 Grenoble, France}
	 \email{jean-baptiste.meilhan@univ-grenoble-alpes.fr}
\author[H. Ogita]{Harumi Ogita}
         \address{Waseda University, 1-6-1 Nishi-Waseda,
  Shinjuku-ku, Tokyo 169-8050, Japan}
         \email{harumi@aoni.waseda.jp}
\author[A. Yasuhara]{Akira Yasuhara} 
\address{Faculty of Commerce, Waseda University, 1-6-1 Nishi-Waseda,
  Shinjuku-ku, Tokyo 169-8050, Japan}
	 \email{yasuhara@waseda.jp}
%
%
\begin{abstract} 
Ohyama, Taniyama and Yamada showed that finite type knot invariants are insensitive to the unknotting number: for any integer $n$ and any knot $K$, there is an unknotting number $1$ knot which cannot be distinguished from $K$ by any finite type invariant of order $\le n$. We generalize this result in three directions. 
First, we prove an analogous result for all higher order unknotting number given by $C_n$-moves. 
Second, we give several extensions to the link case. 
Third, we show that a similar result holds for ribbon $2$-knots.
\end{abstract} 

\maketitle

\section{Introduction}
The unknotting number, the minimal number of crossing changes needed to trivialize a knot, is one of the oldest invariant 
of knot theory. It remains however quite mysterious and notoriously difficult to compute; it was only very recently that its non-additivity under connected sum was established \cite{BH26}. 

A natural question is whether other, more computable knot invariants can help estimate the unknotting number. 
The question was in particular raised for finite type (or Vassiliev-Goussarov) invariants, which form a rich class of knot invariants. 
A result of Ohyama-Taniyama-Yamada, subsequently reproved by Ohyama, answers by the negative: 
\begin{thm}\label{thm:OTY} \cite{OTY,O}
Let $K$ be a knot and $n$ be an integer ($n\ge 2$). 
There exists an unknotting number $1$ knot $J$ such that $v_n(K)=v_n(J)$ for all finite type invariants $v_n$ of degree $\le n$. 
\end{thm}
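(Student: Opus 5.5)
We use Habiro's clasper theory: two knots share all finite type invariants of degree $\le n$ if and only if they are $C_{n+1}$-equivalent. It therefore suffices to find an unknotting number one knot $J$ that is $C_{n+1}$-equivalent to $K$.

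First, since crossing changes are $C_1$-moves and $C_1$-trees can be decomposed, $K$ is obtained from the unknot $U$ by surgery along a disjoint union of simple tree claspers. Habiro's calculus says that, up to $C_{n+1}$-equivalence, $K$ is obtained from $U$ by surgery along a product of simple $C_k$-trees with $2\le k\le n$. Degree $1$ trees can be discarded, because every knot is $C_2$-equivalent to the unknot. After this step, $K \sim_{C_{n+1}} U_{T_1\cup\cdots\cup T_m}$, where each $T_i$ is a simple tree clasper of degree between $2$ and $n$.

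The key step is to realise all these trees with one crossing change. Choose a single crossing change, i.e. a $C_1$-tree (a clasp) $c$ whose two leaves are small disks on $U$. Use the move that slides a leaf of a tree through another strand or clasper: it introduces a higher-degree tree and is otherwise invisible modulo $C_{n+1}$. With it, we try to place the trees $T_i$ so that, before the crossing change, their total effect cancels modulo $C_{n+1}$, while after the crossing change it equals $T_1\cdots T_m$.

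One concrete way to do this is as follows. Take a knot $J$ that, up to $C_{n+1}$, equals $U$ surgered along $T_1\cdots T_m$ together with inverse trees $\bar T_1\cdots\bar T_m$. The inverse trees are placed so that one edge of each passes through the clasp $c$. Surgery along $c$ then converts each pair $T_i\bar T_i$ into something trivial. Equivalently, each $T_i$ is presented as a commutator with the clasp.

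A cleaner formulation works in the group of $C_{n+1}$-equivalence classes of knots, which is a finitely generated abelian group (Habiro, Goussarov). That group is generated by knots of the form $U_{T}$ with $T$ a single simple tree clasper. We aim to show that each such generator is $C_{n+1}$-equivalent to a knot $J_T$ of unknotting number one. We construct $J_T$ as a ``twisted double'' type knot: a tree $T$ of degree $k$ whose one leaf grabs a clasp can be turned into a tree of degree $k-1$ plus a crossing change.

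Knowing this for generators is not enough, because a connected sum of unknotting number one knots need not have unknotting number one. We will therefore not take connected sums. Instead, we will nest all the trees onto the same clasp $c$: all leaves that need the crossing change are placed to grab the same clasp. Surgery on $c$ then simultaneously produces all the needed trees, and before surgery on $c$ the knot is trivial.

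The main obstacle is making the knot obtained before the crossing change literally the unknot, not merely $C_{n+1}$-trivial. To handle it, we build $J$ in reverse. Start from $U$ with the clasp $c$ and the trees $T_i'$, where each $T_i'$ has one extra leaf grabbing an edge of $c$. Such trees are trivial on $U$, since a leaf bounding a disk meeting nothing makes the tree surgery trivial. So $J$ is the result of surgery on $c$ and the $T_i'$, and undoing the single crossing change gives exactly $U$. It remains to check, via Habiro's moves (leaf slides past the clasp and the IHX/STU relations), that surgery on $c\cup T_i'$ produces $T_i$ modulo $C_{n+1}$. We need to choose the degree-$(k-1)$ trees $T_i'$ carefully so that this yields the required degree-$k$ trees; we expect this bookkeeping to be the delicate part.
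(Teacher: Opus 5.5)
Your final construction is exactly the paper's set-up (Remark~\ref{rem:k=1}): a single clasp $c$ on $U$, plus trees each having one leaf that meets the edge $e$ of $c$, so that deleting $c$ leaves the unknot. There, $c$ is a $C_1$-tree $B$ with $U_B=U$ and the other trees form a union of $B$-trivial claspers. But the proposal stops where the content of the proof lies.

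\textbf{The missing local identity.} You need to know what a $c$-trivial tree is worth. The statement required is this: if a $C_d$-tree $P$ is obtained from a $C_{d-1}$-tree $C$ by replacing one leaf $f$ of $C$ with a parallel copy of $c$, then $U_{c\cup P}\stackrel{C_{d+1}}{\sim}U_{c\cup C}$, where $f$ is now a small leaf meeting $e$ once. This is the case $T=c$ of Lemma~\ref{lem:tech}. Nothing in your proposal establishes it. Note that the leaf slide of Lemma~\ref{lem:calculus}, applied to a $C_{d-1}$-tree, holds only modulo $C_d$, so it discards precisely the degree-$d$ tree you want to create; a sharper statement is needed. The paper proves it by Habiro's zip construction:
\begin{enumerate}
\item apply move 12 where $f$ meets $e$;
\item push the two resulting boxes across the leaves of $c$ (move 6), obtaining a copy of $P$ up to two half-twists and small leaves linking its edges;
\item remove these modulo $C_{d+1}$ using Lemma~\ref{lem:calculus}, move 3 and Lemma~\ref{lem:twist}.
\end{enumerate}
You also need that every $C_d$-tree factor can be put in this form modulo $C_{d+1}$. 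Corollary~\ref{cor:asihx} makes it linear, and Lemmas~\ref{lem:calculus} and~\ref{lem:twist} then place two leaves sharing a node parallel to the leaves of $c$; on a knot this is always possible.

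\textbf{The single pass fails.} That identity holds only modulo $C_{d+1}$. So choosing one degree-$(k-1)$ tree $T_i'$ per $T_i$, so that $c\cup\bigcup_i T_i'$ reproduces all the $T_i$ modulo $C_{n+1}$ at once, does not work. Absorbing a $C_k$-tree leaves an error of degree $k+1$, and so does disentangling the $T_i'$ from one another and from $c$. These errors are not among your $T_i$ and must be absorbed in turn.

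The fix is the paper's induction on the degree. Assume $K\stackrel{C_{d+1}}{\sim}U_{c\cup E_d}$ with $E_d$ a union of $c$-trivial trees. By \cite[Prop.~3.22]{Habiro}, write $K=U_{c\cup E_d\cup F}$ with $F$ a union of simple $C_{d+1}$-trees. Split $F$ off into product factors modulo $C_{d+2}$, and replace each factor by a new $c$-trivial $C_d$-tree modulo $C_{d+2}$ using the identity above. After reaching $d=n$, take $J=U_{c\cup E_n}$. In this scheme your initial decomposition of $K$ into the $T_i$ is unnecessary. The base case is just $K\stackrel{C_2}{\sim}U=U_c$, for a clasp $c$ chosen with $U_c=U$.
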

\noindent In other words, finite type knot invariants are insensitive to the unknotting number. 

Another classical unknotting operation is the $\Delta$-move \cite{MN,Matveev}, illustrated on the left-hand side of Figure \ref{fig:delta}. 
\begin{figure}[!h]
  \includegraphics[scale=0.7]{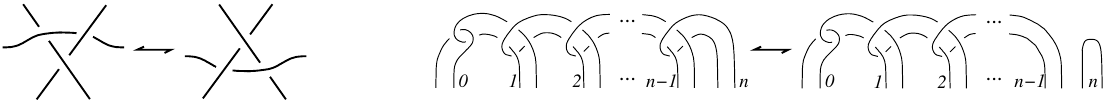} 
  \caption{A $\Delta$-move (left) and a $C_n$-move (right).} {\footnotesize A $C_2$-move is equivalent to a $\Delta$-move, while a $C_1$-move is equivalent to a crossing change}  \label{fig:delta}
\end{figure}
The $\Delta$-unknotting number $u_\Delta$ has also been the subject of intensive studies.
Okada proved that a lower bound is given by the absolute value of the Casson knot invariant $a_2$, that is, the second coefficient of Conway polynomial \cite{okada}, which is up to scalar the unique finite type knot invariant of degree $2$. However, the second author proved that finite type invariants provide no further information on the  $\Delta$-unknotting number: 
\begin{thm}\label{thm:harumi} \cite{Harumi}
Let $K$ be a knot such that $a_2(K)=0$, and let $n$ be any integer ($n\ge 3$). 
There exists a knot $J$ with $\Delta$-unknotting number $2$ such $v_n(K)=v_n(J)$ for all finite type invariants $v_n$ of degree $\le n$. 
\end{thm}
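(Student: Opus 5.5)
Since $a_2(K)=0$, Okada's bound leaves room for $u_\Delta$ to be $2$, and the idea is to realize the $C_{n+1}$-equivalence class of $K$ by a knot of the form $U$ modified by two $\Delta$-moves. The main tool is Goussarov--Habiro theory: two knots share all finite type invariants of degree $\le n$ if and only if they are $C_{n+1}$-equivalent. It therefore suffices to construct a knot $J$ with $u_\Delta(J)=2$ that is $C_{n+1}$-equivalent to $K$.

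\textbf{Step 1: put $K$ in normal form.} Since $a_2(K)=0$, $K$ is $C_3$-equivalent to the unknot, because the degree $2$ part is detected by $a_2$ alone. Hence $K$ is obtained from the unknot $U$ by surgery along a disjoint union of $C_k$-trees with $3\le k\le n$, modulo $C_{n+1}$. Clasper calculus lets us slide leaves and move trees past each other, at the cost of higher order trees. Using it, I would gather all these trees so that they form a single degree-$\ge 3$ tree system attached to $U$ in a small ball region.

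\textbf{Step 2: absorb the trees into two $\Delta$-moves.} A $\Delta$-move is surgery along a $C_2$-tree ($Y$-clasper). The key trick, following Ohyama--Taniyama--Yamada in the crossing change case, is to build a knot $J=U_{T\cup T'}$ where $T$ and $T'$ are two $Y$-claspers arranged so that:
\begin{itemize}
\item each of $T$ and $T'$ has a leaf that is allowed to wander around, clasping the knot many times in a controlled pattern;
\item the degree $2$ contributions of $T$ and $T'$ cancel, so that $a_2(J)=0$;
\item the commutators created when the wandering leaf is pulled back reproduce, modulo $C_{n+1}$, exactly the higher trees of Step 1.
\end{itemize}
Concretely, a $C_k$-tree is realized up to $C_{k+1}$ by taking a $Y$-clasper whose leaf has a band winding in an iterated commutator pattern around other strands of $U$. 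Two $Y$-claspers of opposite sign with nearly identical leaves give degree-$2$ cancellation, while their difference creates the higher trees. By induction on degree, I would correct each degree $k=3,\dots,n$ in turn by modifying the leaves of $T$ and $T'$. This gives $u_\Delta(J)\le 2$ and $J\sim_{C_{n+1}}K$.

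\textbf{Step 3: show $u_\Delta(J)\ge2$.} This means ensuring $u_\Delta(J)\neq 1$ (and $J\ne U$). Okada's bound is useless here since $a_2=0$. Instead I would make $J$ differ from any $\Delta$-distance-one knot via another obstruction, for instance a signature or Alexander polynomial argument. Alternatively, since $a_2(J)=0$, I would use the fact that a knot with $u_\Delta=1$ has $a_2=\pm1$ ($\Delta$-moves change $a_2$ by $\pm1$), so $u_\Delta(J)=1$ is impossible outright. Thus $u_\Delta(J)\ge 2$ as soon as $J$ is nontrivial. Nontriviality can be forced by adding a pair of local trefoil-type $\Delta$-patterns that do not affect the invariants of degree $\le n$, if needed; if $K$ is itself $C_{n+1}$-trivial, take $J$ a nontrivial $C_{n+1}$-trivial knot with $u_\Delta=2$.

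The main obstacle is Step 2: showing that two $Y$-claspers suffice to encode an arbitrary element of the degree $3$ to $n$ part of the graded group of knots modulo $C_{n+1}$. I would attack it via the group structure, since knots modulo $C_{n+1}$ form an abelian group. The plan is to show that the set of knots obtained by two opposite-sign $\Delta$-moves with arbitrarily wandering leaves is closed under the needed operations in each graded piece. I would do this by generating the pieces through iterated leaf-sliding, which produces brackets of arbitrary shape.
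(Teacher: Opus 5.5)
Your architecture is the right one and matches the paper's proof of this statement, which is the case $k=2$ of Theorem \ref{thm:knots}. There, $B=B_{\mathrm{id}}\sqcup B^{-1}_{\mathrm{id}}$ is a pair of opposite-sign $C_2$-trees with $\1_{B_{\mathrm{id}}}\cdot\1_{B^{-1}_{\mathrm{id}}}\stackrel{C_3}{\sim}\1$. One then shows by induction that $K\stackrel{C_{n+1}}{\sim}\1_{B\cup E_n}$, with $E_n$ a union of $B$-trivial trees. Your parity argument is correct: $a_2(J)=a_2(K)=0$ and a $\Delta$-move changes $a_2$ by $\pm1$, so $u_\Delta(J)\neq 1$. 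The paper leaves this point implicit.

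The gap is Step 2, which is the whole content of the theorem and is only a hope in your proposal. You never give a mechanism showing that a degree-$m$ correction ($3\le m\le n$) can be produced using only the two existing $Y$-claspers. Nor do you show that every element of the degree-$m$ graded piece is reachable; ``closure under the needed operations'' is exactly what has to be proved. The missing ingredient is Lemma \ref{lem:tech}, a reverse Zip construction. If a $C_m$-tree $P$ runs parallel to the $C_2$-tree $T$ near $T$ as in Figure \ref{fig:tech}, then $\1_{T\cup P}\stackrel{C_{m+1}}{\sim}\1_{T\cup C}$. Here $C$ is a $C_{m-2}$-tree with a leaf encircling an edge of $T$, so $C$ becomes trivial once $T$ is deleted.

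This lemma is combined with the reduction (Corollary \ref{cor:asihx}, Lemma \ref{lem:twist}) of all degree-$m$ trees, modulo $C_{m+1}$, to linear trees $T^{\pm1}_\tau$. These can be slid next to $B_{\mathrm{id}}$; the other ordering of the relevant leaves is handled by AS and Remark \ref{rem:tech}. This absorbs every new tree at each inductive step. The $B$-triviality of $E_n$ is exactly what makes deleting the two $Y$'s unknot $J$, giving $u_\Delta(J)\le 2$ with no extra moves.

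Your concrete mechanism also contradicts the bound you want. A $Y$-clasper whose leaf winds in an iterated commutator pattern, or ``clasps the knot many times'', is $C_3$-equivalent to doing nothing, as you say yourself. So it leaves $a_2$ unchanged, and by your own Step 3 argument its surgery cannot be a single $\Delta$-move. Surgery along a $C_2$-tree is one $\Delta$-move only when each leaf is a disk meeting the knot once. What may be complicated is the embedding of the edges, and this is effectively what the $B$-trivial trees grabbing edges of $B$ encode.

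Finally, your nontriviality patch does not work: adding a pair of trefoil-type $\Delta$-patterns raises the count to four. Instead, note that $J$ can be trivial only if $K$ is $C_{n+1}$-trivial. In that case, apply the construction with $n+1$ in place of $n$ to a knot that is $C_{n+1}$-trivial but detected by some degree $n+1$ invariant.
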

\medskip

The purpose of the present paper is to provide several generalizations of these results, outlined in the next three subsections. 

\subsection{Higher order unknotting numbers for knots}\label{sec:introknots}

The crossing change and $\Delta$-move can be seen as the first two of an infinite family of local moves called \emph{$C_n$-moves},  illustrated in the right-hand side of Figure \ref{fig:delta}. 
These local moves were introduced by Habiro \cite{Hmt}, and can hence be seen as higher order crossing changes. 
For each $n$, the $C_n$-move generates an equivalence relation called \emph{$C_n$-equivalence}; two knots are $C_n$-equivalent if and only if they  cannot be distinguished by any finite type invariant of degree $<n$ \cite{Goussarov,Habiro}. 

A knot $K$ is called \emph{$C_n$-trivial} if it is $C_n$-equivalent to the unknot, that is, if it can be trivialized by a sequence of $C_n$-moves: we denote by $u_n(K)$, and call \emph{$C_n$-unknotting number} of $K$, the minimal number of $C_n$-moves in such a sequence. Note that $u_1$ is the usual unknotting number. 
Since finite type invariants of degree $<n$ give lower bounds for the $C_n$-unknotting number \cite{OY,Tsukamoto,Kanenobu}, 
one naturally wonders whether degree $\ge n$ invariants provide further information. 
The following, our first main result, answers by the negative. 
\begin{thmintro}\label{thm:knots}
Let $K$ be knot with vanishing finite type invariants of degree $< k+1$. 
For all $n\ge k+1$, there exists a knot $J$ such that $u_k(J)\le k!$ and $v_n(K)=v_n(J)$  for all finite type invariants $v_n$ of degree $\le n$. 
\end{thmintro}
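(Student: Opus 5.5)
We reduce to the Goussarov--Habiro theory: since $K$ has vanishing finite type invariants of degree $<k+1$, $K$ is $C_{k+1}$-equivalent to the unknot $O$. It therefore suffices to build a knot $J$ that is $C_{n+1}$-equivalent to $K$ and obtained from $O$ by at most $k!$ $C_k$-moves. Indeed, $C_{n+1}$-equivalent knots share all invariants of degree $\le n$, and a $C_k$-trivialization of $J$ by $k!$ moves gives $u_k(J)\le k!$.

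\textbf{Step 1.} Express $K$, modulo $C_{n+1}$-equivalence, as a product of tree claspers. By Habiro's calculus, $K$ is obtained from $O$ by surgery on a disjoint union of simple tree claspers of degrees $k+1,\dots,n$; equivalently, the class of $K$ lies in the abelian group $\mathcal K_{k+1}/\mathcal K_{n+1}$ under connected sum. I will use the fact that the graded pieces $\mathcal K_m/\mathcal K_{m+1}$ are generated by a single type of degree-$m$ tree clasper, considered up to the AS/IHX/STU relations.

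\textbf{Step 2.} Absorb everything into one tree of degree $k$. The key idea, adapting Ohyama's proof of Theorem~\ref{thm:OTY}, is the following. A $C_k$-move is surgery on a degree-$k$ simple tree clasper $T$ with $k+1$ leaves. If the leaves of $T$ are allowed to wind around the knot in complicated ways, then surgery on $T$ equals, modulo $C_{n+1}$, a connected sum of many higher-degree tree surgeries. These come from leaf-splitting and from crossing-change/slide moves of edges across the knot, which create higher-degree trees with multiplicity. I would aim to show by induction on degree $m=k+1,\dots,n$ that any prescribed element of $\mathcal K_m/\mathcal K_{m+1}$ can be realized as the degree-$m$ correction produced by modifying a fixed family of degree-$k$ trees. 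The mechanism is that passing an edge of $T$ through the knot changes the result by a degree-$(k+1)$ tree, and higher degrees are reached by iterating. The degree-$k$ contribution itself must vanish, since $K$ is $C_{k+1}$-trivial. So I would use pairs of trees with opposite degree-$k$ contributions, or one tree whose leaves are arranged so that its degree-$k$ part cancels in $\mathcal K_k/\mathcal K_{k+1}$.

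\textbf{Step 3.} Count the trees to get the bound $k!$. The number $k!$ suggests that a degree-$k$ element is killed only after summing over orderings of the leaves: there are $k!$ orderings of $k$ leaves along the knot relative to a fixed one. Each clasper of the form \emph{tree with leaves in permuted order, with signs} contributes zero to the degree-$k$ part, by the AS/IHX relations applied to the Jacobi-type sum. Its higher-order residue can be made arbitrary by tying the edges. Concretely, I would take $k!$ degree-$k$ trees $T_\sigma$ indexed by permutations, arranged so their degree-$k$ classes sum to zero. Then I would embed the edges to realize the degree-$(k+1),\dots,n$ classes of $K$, correcting order by order and using that each correction is a tree of degree $\ge k+1$ that can be slid into the existing edges. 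Surgery on these $k!$ trees is $k!$ $C_k$-moves applied to $O$, which gives $J$.

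The main obstacle is Step 2/3: showing that the higher-order residues of a fixed number ($k!$) of degree-$k$ tree surgeries can be made to equal an \emph{arbitrary} element of $\mathcal K_{k+1}/\mathcal K_{n+1}$, with no increase in the number of trees as $n$ grows. I would first try the approach of Theorem~\ref{thm:harumi}: show that clasping edges of the $T_\sigma$ with auxiliary higher trees can be traded, via the zip construction and leaf slides, for self-crossings of the edges of the $T_\sigma$ themselves. Then I would check that these self-crossing modifications generate $\mathcal K_m/\mathcal K_{m+1}$ for each $m>k$.
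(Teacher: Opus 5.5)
Your overall architecture matches the paper's. There is a reservoir of $k!$ simple $C_k$-trees on the trivial long knot, grouped in pairs whose degree-$k$ contributions cancel: the paper takes $B_\sigma$ and $B^{-1}_\sigma$, differing by a half-twist, so that $\1_{B_\sigma}\cdot\1_{B^{-1}_\sigma}\stackrel{C_{k+1}}{\sim}\1$ by the Twist Lemma. An induction on $m$ then maintains $K\stackrel{C_{m+1}}{\sim}\1_{B\cup E_m}$, where every component of $E_m$ has a trivial leaf once $B$ is deleted. However, the step that actually proves the theorem is the one you leave open as ``the main obstacle'', so what you have is a plan rather than a proof.

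The missing ingredient is a precise absorption lemma, which is Lemma~\ref{lem:tech}. Suppose a $C_m$-tree $P$ ($m\ge k+1$) looks, near a $C_k$-tree $T$, like a parallel copy of $T$ with a subtree grafted onto an edge $e$. Then $\1_{T\cup P}\stackrel{C_{m+1}}{\sim}\1_{T\cup C}$, where $C$ is a $C_{m-k}$-tree with one leaf clasping $e$ and its other leaves on the knot; in particular $C$ becomes trivial when $T$ is removed. The paper proves this by a Zip-type argument using Habiro's moves 12, 6 and 3, with Lemma~\ref{lem:calculus} and the Twist Lemma to clean up. Your ``pass an edge of $T$ through the knot, and iterate'' is essentially only the $m=k+1$ instance, where $C$ is a $C_1$-tree. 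For higher $m$ you need to know that a given degree-$m$ error tree can be traded, with error in $C_{m+1}$, for a reservoir-trivial clasper, and you give no argument for this.

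Your account of the number $k!$ is also not the right mechanism, and this bears directly on your proposed final check that edge modifications of a fixed family ``generate $\mathcal K_m/\mathcal K_{m+1}$''. The degree-$k$ cancellation is purely pairwise. The signed sum over leaf orderings you invoke is unjustified: by AS, exchanging the two leaves at a common node inverts a tree up to $C_{k+1}$, so sign-weighted sums over orderings need not cancel.

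The permutations are there for absorption, not cancellation. By Lemma~\ref{lem:calculus}, Corollary~\ref{cor:asihx} and the Twist Lemma, the degree-$m$ error is a product of standard linear trees $T^{\pm1}_\tau$, indexed by the order $\tau$ in which their leaves occur along the knot. To place $T_\tau$ in the configuration of Lemma~\ref{lem:tech}, you may slide its leaves past leaves of \emph{other} claspers at cost $C_{m+1}$. You may not slide them past one another, since by STU that changes the class already in degree $m$. So $T_\tau$ can only be absorbed by a reservoir tree whose leaves occur in the same relative order as the relevant leaves of $T_\tau$, namely the pattern $\bar\tau\in S_k$ obtained by restricting $\tau$ to $\{1,\ldots,k\}$.

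Hence the reservoir must contain a tree for each pattern. The patterns outside $\widetilde S_k$ are handled through AS, the Twist Lemma and Remark~\ref{rem:tech}, so only the $k!/2$ elements of $\widetilde S_k$ are needed. Each is doubled with its inverse to kill the degree-$k$ part, which gives $k!$ trees. Without this leaf-pattern analysis, nothing guarantees that a fixed family of $C_k$-trees can absorb every generator of $\mathcal K_m/\mathcal K_{m+1}$.
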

\noindent The case $k=2$ is precisely Theorem \ref{thm:harumi}, since a $\Delta$-move is equivalent to a $C_2$-move (note that the $C_2$-triviality assumption is vacuous). We provide here an alternative proof based on Habiro's clasper theory \cite{Habiro}, which applies naturally at all higher orders. 
See Remark \ref{rem:k=1} for the case $k=1$. 

\begin{rem}
 Using the same argument as in the proof of Theorem \ref{thm:knots}, given in Section \ref{sec:knot}, we can actually prove the following stronger version. Let $K$ and $M$ be two knots that cannot be distinguished by any finite type invariants of degree $< k+1$. For all $n\ge k+1$,  there exists a knot $J$ which can be deformed into $M$ by at most $k!$ $C_k$-moves, and such that $v_n(K)=v_n(J)$  for all finite type invariants $v_n$ of degree $\le n$.  This was already the case in \cite{Harumi}, where Theorem \ref{thm:harumi} is also stated in this stronger form. Similar remarks hold for Theorems \ref{thm:linklh},\footnote{More precisely, for Theorem \ref{thm:linklh}, a similar remark holds for string links rather than links.} \ref{thm:linkconc} and \ref{thm:2knots}, but we will confine ourselves to statements (corresponding to the case where $M$ is trivial) focussing on higher order unknotting numbers.
\end{rem} 

\subsection{Higher order unknotting numbers for links}\label{sec:introlinks}

We next consider upper bounds for the higher order unknotting numbers of links, under additional topological conditions. 
\medskip

On the one hand, we consider the \emph{link-homotopy} relation, due to Milnor \cite{Milnor}, generated by self-crossing changes, that is, crossing changes among two strands of a same link component. The following is proved in Section \ref{sec:prooflh}.
\begin{thmintro}\label{thm:linklh}
Let $L$ be an $n$-component link with vanishing Milnor link-homotopy invariants of length $\le k+1$ ($k\ge 2$). 
There exists an $n$-component link $J$ such that $u_k(J)\le \binom{n-1}{k+1}k!$ and $J$ is link-homotopic to $L$. 
\end{thmintro}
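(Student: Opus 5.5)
We will work with Habiro's clasper calculus in the link-homotopy setting, in the spirit of the string-link classification of Habegger--Lin. The relevant fact is that, up to link-homotopy, any $n$-component link is obtained from the trivial link by surgery on a union of simple tree claspers with no repeated indices. Here a tree has no repeated indices if each of its leaves grasps a different component. Trees with a repeated index can be removed by self-crossing changes, and trees of degree $\ge n$ cannot have all indices distinct, so they vanish up to link-homotopy. Moreover, trees of degree $d$ with distinct indices are organized by Milnor invariants of length $d+1$. The vanishing of all Milnor link-homotopy invariants of length $\le k+1$ should then imply that $L$ is link-homotopic to a surgery on a forest $F$ of trees whose degrees are all $\ge k+1$, with distinct indices. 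I would obtain this inductively by degree, using a string-link representative and the fact that the length-$(d+1)$ invariants detect the degree-$d$ part of the reduced free group, as in Habegger--Lin. Finally, passing from string links to links requires a closure argument, which is harmless for link-homotopy.

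Next I will reduce $F$ to a controlled family of trees. Using the IHX and AS relations and standard basis results for the reduced free group, each degree-$d$ tree with distinct indices is equivalent, modulo higher degree, to a product of \emph{linear} trees whose two ends grasp fixed components. The key step is the degree-$(k+1)$ mechanism from the proof of Theorem~\ref{thm:knots}. That proof, as I expect it to go, shows that a degree-$(k+1)$ tree $T$ can be realized, up to higher order corrections, by $k!$ $C_k$-moves performed on a suitably chosen (possibly knotted) link $J'$. The idea is to exploit the fact that a $C_k$-tree acting with one leaf grasping an existing tree's edge creates $T$, and to choose $J'$ to absorb the leftover terms. For links, I would apply this to the ``top'' part of $F$ one index-set at a time. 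The count $\binom{n-1}{k+1}k!$ suggests the following: fix component $1$ as a distinguished end of every linear tree, and for each $(k+1)$-subset $S$ of the remaining $n-1$ components use one $C_k$-clasper, whose leaves grasp the components of $S$ and an auxiliary strand. The $k!$ factor then accounts for the linear-tree basis coming from orderings of the interior leaves. More precisely, the construction should let a single clasper configuration, with a clever choice of $J$, generate all higher-degree trees supported on $S\cup\{1\}$, and indeed all trees supported on supersets.

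So the plan is as follows. First, choose $J$ to be the result of surgery on the part of $F$ that we wish to ``precompose'' away, namely a link possibly carrying all the higher-degree trees. Then exhibit at most $\binom{n-1}{k+1}k!$ $C_k$-trees, each with distinct indices, whose surgery on $J$ gives the trivial link up to link-homotopy. Equivalently, $J$ is obtained from an unlink by these $C_k$-moves and is link-homotopic to $L$. Throughout, all error terms either have repeated indices, and so are link-homotopically trivial, or have degree $\ge n$, and so vanish.

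The main obstacle is the counting. I must show that trees of every degree $d$ with $k+1\le d\le n-1$ and distinct indices can be generated from only those $C_k$-moves indexed by $(k+1)$-subsets of $\{2,\dots,n\}$, with $k!$ moves each. I would try to do this by letting the component $J$ itself be link-homotopically nontrivial in a way that feeds all higher trees. Concretely, each degree-$d$ tree can be obtained by sliding a leaf of a degree-$(k+1)$ tree along a suitably knotted or linked band of $J$, so that the higher trees come for free from the choice of $J$ rather than from additional moves.
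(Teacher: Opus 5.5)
\textbf{There is a genuine gap: you never construct $J$ or exhibit the $C_k$-moves, and the mechanism you sketch would not work.} Your preliminary reductions are sound. Up to link-homotopy, $L$ is surgery on non-repeated trees of degree $\ge k+1$, these can be taken linear, and trees of degree $\ge n$ are repeated. Your guess that one component is singled out, and that $\binom{n-1}{k+1}$ counts $(k+1)$-subsets of the others, is also in the right spirit. The problems are these:
\begin{itemize}
\item Bounding $u_k(J)$ means reaching the unlink up to \emph{isotopy} by $N$ $C_k$-moves. It does not mean reaching it ``up to link-homotopy'', as your third paragraph says; those two formulations are not equivalent.
\item Letting $J$ ``carry'' or ``feed'' the higher-degree trees through knotted bands costs moves. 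This only comes for free if you prove those trees become trivial once the $N$ $C_k$-trees are deleted, and that is precisely the missing content.
\item Your $C_k$-clasper grasping the $k+1$ components of $S$ ``and an auxiliary strand'' has $k+2$ leaves.
\item Unless the $C_k$-trees you add cancel modulo $C_{k+1}$, they change the length $k+1$ Milnor invariants, so $J$ could not be link-homotopic to $L$.
\item The knot case does not realize a degree-$(k+1)$ tree by $k!$ $C_k$-moves, as you expect.
\end{itemize}

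You need two ideas.

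\emph{A reservoir $H$ of $C_k$-trees that is trivial modulo $C_{k+1}$.} For each increasing $s=i_0\ldots i_k$ in $\{1,\ldots,n-1\}$ and each $\sigma\in\widetilde{S}_k$, take a linear tree $H_\sigma(s)$ together with its inverse $H^{-1}_\sigma(s)$. Here $\widetilde{S}_k$ is the set of orderings modulo the AS symmetry of the last two leaves; it has $k!/2$ elements. Then $\1_H\stackrel{C_{k+1}}{\sim}\1$ by the Twist Lemma, and $H$ consists of $\binom{n-1}{k+1}\cdot 2\cdot\frac{k!}{2}$ trees. This is where the factor $k!$ comes from, not from one clasper per subset.

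\emph{An absorption step.} This is Lemma \ref{lem:tech}, essentially Habiro's zip construction. Take a non-repeated linear $C_{m+1}$-tree with ends on its smallest index $i_0$ and its largest index; the largest index is why component $n$ never enters $H$. Bring it next to the reservoir tree on $\bar s=i_0\ldots i_k$ with matching ordering, using AS, Twist and Remark \ref{rem:tech} if needed. Up to $C_{m+2}$-equivalence it can then be replaced by a tree with a leaf grasping an edge of that reservoir tree, i.e.\ an $H$-trivial tree. The direction is essential: the cheap higher-degree trees grasp the costly $C_k$-trees. If the costly tree grasped a self-trivial one, the fused tree would inherit the trivial leaf and produce nothing.

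Putting these together: induct on $m$ from $k$ until every $C_{m+1}$-tree is repeated ($m\ge n-1$), absorbing repeated trees into link-homotopy. This gives $L\stackrel{C_{m+1}}{\sim}\1_{H\cup E_m\cup R_m}$ with $E_m$ $H$-trivial and $R_m$ repeated. Then $J=\1_{H\cup E_m}$ is link-homotopic to $L$. Deleting the $\binom{n-1}{k+1}k!$ trees of $H$ leaves only claspers with trivial leaves, hence the unlink.
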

\begin{rem}\label{rem:introk=1}
As outlined in Remark \ref{rem:k=1link}, the proof of Theorem \ref{thm:linklh} also applies to the case $k=1$ with minor adaptation, proving the following: 
If $L$ is an $n$-component  algebraically split link, there exists a link $J$ such that $u_1(J)\le (n-1)(n-2)$ and which is link-homotopic to $L$. 
Recall that a link is called algebraically split if all pairwise linking numbers vanish, that is, if all Milnor invariants of length $\le 2$ vanish. By \cite{MN}, a link is algebraically split if and only if it is $C_2$-trivial. 
\end{rem}

We stress that a link can be trivialized by a sequence of $C_k$-moves and link-homotopies if and only if it has vanishing Milnor link-homotopy invariants of length $\le k$, see Lemma \ref{rem:Cm+lh}.  
By Lemma \ref{lem:lhclasper} and \cite[Prop.~3.22]{Habiro}, Theorem \ref{thm:linklh} reformulates as follows. 
\begin{corintro}
Let $L$ be an $n$-component link with vanishing Milnor link-homotopy invariants of length $\le k+1$ ($k\ge 2$). The minimal number $n_k(L)$ of $C_k$-moves needed to transform $L$ into a link-homotopically trivial link satisfies $n_k(L)\le  \binom{n-1}{k+1}k!$. 
\end{corintro}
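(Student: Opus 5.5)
The corollary follows from Theorem \ref{thm:linklh}, and I would prove it by showing that the quantity $n_k(L)$ is bounded above by $u_k(J)$ for the link $J$ produced there. Let $J$ be the $n$-component link given by Theorem \ref{thm:linklh}: it is link-homotopic to $L$ and satisfies $u_k(J)\le \binom{n-1}{k+1}k!$. The task is then to convert a $C_k$-unknotting sequence for $J$, together with the link-homotopy between $J$ and $L$, into a sequence of $C_k$-moves from $L$ to some link-homotopically trivial link, using the same number of $C_k$-moves.

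The key step is a commutation argument. Write $L = J$ up to link-homotopy, so $L$ is obtained from $J$ by self-crossing changes, which are realized by surgery on $C_1$-trees whose two leaves lie on the same component. Fix $m=u_k(J)$ simple $C_k$-trees $T_1,\dots,T_m$ for $J$ such that surgery on all of them yields the unlink; each $C_k$-move is surgery on one such tree. I would then realize $L$ as $J$ with additional self-crossing claspers $S$ disjoint from the $T_i$. The point is that, possibly after isotoping the $T_i$ and $S$ off each other, surgery on $T_1\cup\dots\cup T_m$ applied to $L = J_S$ gives $U_S$, where $U$ is the unlink. Then $U_S$ is link-homotopic to $U$, hence link-homotopically trivial, and $L$ has been transformed into it by $m$ $C_k$-moves. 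The needed tools are Lemma \ref{lem:lhclasper}, which says surgery along a clasper with repeated component is a link-homotopy, and \cite[Prop.~3.22]{Habiro}, which gives the equivalence between sequences of $C_k$-moves and surgery on disjoint families of $C_k$-trees.

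The main obstacle is ensuring that the claspers can be made disjoint. Sliding a $C_k$-tree past a self-crossing change may produce extra claspers. By Habiro's calculus these extra claspers are of higher degree and, crucially, still have two leaves on the same component. By Lemma \ref{lem:lhclasper} they act trivially up to link-homotopy, so they can be absorbed into $S$ without adding any $C_k$-moves. I expect this to be the step requiring the most care. Once it is handled, we get $n_k(L)\le m\le\binom{n-1}{k+1}k!$.
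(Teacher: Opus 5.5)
Your proposal is correct and matches the paper, which obtains the corollary directly from Theorem \ref{thm:linklh} via Lemma \ref{lem:lhclasper} and \cite[Prop.~3.22]{Habiro}. You do the same: write $L=J_S$ with $S$ a union of repeated claspers and $J_T=U$ with $T$ a disjoint union of at most $\binom{n-1}{k+1}k!$ simple $C_k$-trees, so that $L_T=U_S$ is link-homotopically trivial. The worry in your last paragraph is unnecessary: $S$ and $T$ are both claspers for the same link $J$, so you can isotope $S$ off $T$ in the complement of $J$ by general position (after shrinking the claspers to neighborhoods of graphs), and no extra claspers arise.
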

\begin{rem}
The case $k=1$ is given by Bosman, Davis, Martin, Otto and Vance in \cite[Cor.~1.3]{BDMOV}. While their argument is algebraic, we recover this result in a geometric way (see Remark \ref{rem:k=1link}) . 
The case $k=2$ says that if $L$ has vanishing linking numbers and triple linking numbers, then $n_2(L)\le \frac{1}{3}(n^3-6n^2+11n-6)$. 
This improves significantly the upper bound $\frac{2}{3}(n^3-3n^2+2n-6)$ given in \cite[Cor.~1.6]{BDMOV}. The higher order cases do not seem to have been previously investigated. 
\end{rem}
\medskip 

On the other hand, we consider links up to concordance, and obtain the following bound for the $\Delta$-unknotting number for string links. 
\begin{thmintro}\label{thm:linkconc}
Let $L$ be a $C_3$-trivial $n$-component string link and let $p$ be an integer. 
There exists a string link $J$ with $\Delta$-unknotting number $u_2(J)\le \frac{1}{3}(n^3-n)$  
such that $v_p(L)=v_p(J)$ for all finite type concordance invariants $v_p$ of degree $\le p$. 
\end{thmintro}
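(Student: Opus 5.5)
We work with Habiro's clasper calculus for string links, together with the known structure of finite type concordance invariants. These are generated by Milnor invariants, via the Habegger–Masbaum result that finite type concordance invariants of string links factor through the rational Milnor invariants and are determined by the tree part of the Kontsevich integral. So it suffices to construct $J$ with $u_2(J)\le \frac13(n^3-n)$ such that $J$ and $L$ share all Milnor invariants of length $\le p+1$.

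First, since $L$ is $C_3$-trivial, $L$ is obtained from the trivial string link by surgery on a disjoint union of tree claspers of degree $\ge 3$. Up to concordance and $C_{p+1}$-equivalence, we rewrite this as a product of \emph{linear} tree claspers. The tool is the IHX and STU relations for claspers up to higher degree, plus the fact that concordance kills the non-tree (looped) part. The Milnor invariants then determine the string link modulo $C_{p+1}$ and concordance.

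Second, each degree-$d$ tree clasper with leaves on components $i_1,\dots,i_{d+1}$ should be realized by a single $C_2$-move. A degree $d\ge 3$ tree $T$ can be obtained from a $C_2$-tree ($Y$-clasper) $Y$ by replacing two of its leaves with parallel copies that grab along the rest of the tree. Concretely, take a $Y$-clasper whose three leaves lie on components $i,j$ and a band that winds around the other strands. Surgery on $Y$ then equals surgery on $T$ modulo higher-degree trees. This is the standard "leaf-band" trick of Ohyama–Taniyama–Yamada: a leaf of a $Y$-clasper that is a ribbon running along a knotted band realizes all the higher terms at once.

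The key point is to choose, for each triple of components $i\le j\le k$ — there are $\binom{n+2}{3}-n$-type counts, which the bound $\frac13(n^3-n)=2\binom{n+1}{3}$ suggests — at most two $Y$-claspers with leaves on $i,j,k$. Their bands are allowed to be arbitrarily complicated string links in the complement. I would argue inductively on the degree $d$ from $3$ to $p$. At stage $d$, the discrepancy between $L$ and the current $J$ lies in degree $d$ and is a sum of linear trees. Each such tree with first leaf at component $i$ and last two leaves at $j,k$ can be absorbed by modifying the band of the $Y$-clasper attached to $(i,j,k)$. This modification is a band-sum with a degree-$(d-2)$ string link, and changes nothing in lower degree. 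So the number of $C_2$-moves stays fixed while the Milnor invariants are matched degree by degree.

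The main obstacle is the counting: showing that every linear tree of degree $d$ on components can be generated from $Y$-claspers indexed by only $2\binom{n+1}{3}$ triples, with all modifications confined to the bands. I would first use the AS/IHX relations and the cyclic symmetry of Milnor invariants to put trees in a normal form. In that form, the two "ends" of a linear tree satisfy a fixed ordering, say the last two leaves $j,k$ and the first leaf $i$ satisfy $i\le j\le k$ with a two-fold choice. One then checks the count by dimension arguments on the free Lie algebra, relating to the rank of $\mathrm{D}_d$ in Orr/Habegger–Lin. This step I expect to require the most care.
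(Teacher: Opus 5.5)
Your overall architecture matches the paper's: a fixed reservoir of $C_2$-trees absorbs higher-degree trees without adding $\Delta$-moves, and looped claspers are discarded by concordance. Two steps fail as written.

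\textbf{First: the reduction to Milnor invariants is not available.} The claim that finite type concordance invariants of string links factor through Milnor invariants is not an established theorem. Over $\mathbb{Q}$ it is a conjecture of Habegger--Masbaum. With torsion coefficients it is false: the mod $2$ Arf invariant of a component is a degree $2$ finite type concordance invariant that no $\mu$-invariant sees. Likewise, Milnor invariants of length $\le p+1$ do not classify string links up to $C_{p+1}$-equivalence and concordance. Already at $p=2$, the decomposition recalled in Section~\ref{sec:moreconc} involves Arf invariants and mod $2$ Sato--Levine invariants besides $\mu(ijk)$. The paper needs none of this. It proves by induction that $L\stackrel{C_{p+1}}{\sim}\1_{D\cup E_p\cup G_p}$, with $E_p$ $D$-trivial and $G_p$ looped. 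By Lemma~\ref{lem:loop}, $L$ is then $C_{p+1}$-concordant to $J=\1_{D\cup E_p}$, which gives equality of all finite type concordance invariants of degree $\le p$ with arbitrary coefficients. Your clasper manipulations should aim directly at this conclusion.

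\textbf{Second: the count is where the actual idea is missing.} No dimension count in the free Lie algebra can supply it. The reservoir is independent of $p$, and its size is dictated by degree $2$ data. Your scheme (two $Y$-claspers per multiset $\{i\le j\le k\}$, not all equal) uses $2\binom{n+2}{3}-2n=\frac13(n^3-n)+n(n-1)$ trees, which exceeds the bound. Three things are needed.

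(a) Sort a linear $C_{p+1}$-tree by the components of the three consecutive leaves at one end: the end, its sibling, and the next leaf. Do not sort by the first leaf and the last two. This is the configuration where Lemma~\ref{lem:tech} applies. It absorbs the tree into a reservoir tree $T$ via a $T$-trivial $C_{p-1}$-tree grabbing an \emph{edge} of $T$, so $T$ stays simple and still costs one $\Delta$-move. Modifying a leaf instead, as in your ``leaf-band trick'', makes the $Y$ non-simple, and it is then in general no longer a single $\Delta$-move.

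(b) If the end and its sibling lie on the same component, the tree is $C_{p+2}$-equivalent to a looped clasper by Lemma~\ref{cor:loop}(1). It needs no reservoir tree at all.

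(c) The reservoir must itself be $C_3$-concordant to $\1$. For a pair $i<j$ this needs only two trees, $W_{ij}$ and $W'_{ij}$, which together serve both two-component configurations. The symmetry of the Whitehead link, combined with AS and STU, gives $\1_{W_{ij}}\cdot\1_{W'_{ij}}\stackrel{C_3}{\sim}\1_{G_{ij}}$ with $G_{ij}$ looped; this is a $2$-torsion phenomenon. Only for distinct $i<j<k$, where $\mu(ijk)\in\mathbb{Z}$, does one need a tree together with its inverse, $B_{ijk}$ and $B^{-1}_{ijk}$.

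Together these give exactly $2\binom{n}{2}+2\binom{n}{3}=\frac13(n^3-n)$ reservoir trees.
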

Recall that Milnor invariants of length $\le k$ are finite type concordance invariants of degree $<k$ for string links. 
Nonetheless the following, which is a consequence of the proof of Theorem \ref{thm:linkconc}, holds for links. 
\begin{corintro}\label{cor:linkconc}
Let $L$ be a $C_3$-trivial $n$-component link and let $p$ be an integer. 
The minimal number of $\Delta$-moves needed to transform $L$ into a link with vanishing Milnor invariants of length $\le p$ is at most $\frac{1}{3}(n^3-n)$. 
\end{corintro}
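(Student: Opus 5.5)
The plan is to deduce Corollary \ref{cor:linkconc} from the construction in the proof of Theorem \ref{thm:linkconc}, passing from links to string links and back.

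\textbf{Step 1: reduce to string links.} Write the $C_3$-trivial $n$-component link $L$ as the closure $\widehat{\sigma}$ of an $n$-component string link $\sigma$. Since $L$ is $C_3$-trivial, it is $C_2$-trivial, so all pairwise linking numbers vanish. I would choose $\sigma$ so that it is itself $C_3$-trivial. Here I would use Habiro's clasper calculus: a sequence of $C_3$-moves from $L$ to the unlink is realized by a union of tree claspers of degree $3$ on the trivial link. Each such clasper can be isotoped to lie in the complement of a $3$-ball meeting every component in a trivial arc. Cutting along this ball gives a string link obtained from the trivial string link by degree $3$ tree claspers, hence $C_3$-trivial.

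\textbf{Step 2: apply the string link construction.} Apply the proof of Theorem \ref{thm:linkconc} to $\sigma$ with $p$ large. This gives a string link $\tau$ obtained from $\sigma$ (or directly from the trivial string link) by at most $\frac13(n^3-n)$ $\Delta$-moves, such that $\sigma$ and $\tau$ share all finite type concordance invariants of degree $\le p$. I expect the proof of Theorem \ref{thm:linkconc} to be built this way. It should surger $C_2$-trees realizing the Milnor invariants of length $3$, of which there are $\binom n3 + 2\binom n3$ type counts, together with self-terms, totalling $\frac13(n^3-n)$. The higher-degree part would be absorbed into concordance-invariant-trivial claspers.

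What I really need is the statement "$\sigma$ can be changed by at most $\frac13(n^3-n)$ $\Delta$-moves into a string link $\sigma'$ with vanishing Milnor invariants of length $\le p$." Milnor invariants of length $\le p$ are finite type concordance invariants of degree $<p$. So this follows if the construction is read as modifying $\sigma$ itself: combine $J$ with $\sigma$ by stacking $\sigma\cdot\overline{J}$-type products, using that Milnor invariants of length $\le p$ are additive on string links whose lower-length invariants vanish.

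\textbf{Step 3: close up.} The closure $\widehat{\sigma'}$ is obtained from $L$ by the same number of $\Delta$-moves, since $\Delta$-moves are local. Vanishing of all Milnor string link invariants of length $\le p$ implies vanishing of the link invariants $\overline{\mu}$ of length $\le p$ of the closure. This holds because the link invariants are the string link invariants modulo the indeterminacy generated by lower-length ones, which all vanish.

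\textbf{Main obstacle.} The delicate point is Step 2. The bound must come from a construction applied to $\sigma$ directly, not from an existential string link $J$ that merely shares invariants with $\sigma$. I would first inspect whether the proof of Theorem \ref{thm:linkconc} produces $J$ from $L$ by $\Delta$-moves that cancel the degree-$2$ clasper content of $L$ up to concordance-trivial higher terms. If instead it builds $J$ from the trivial string link, I would apply it to the string link inverse of $\sigma$, where concordance classes form a group. I would then use the $\Delta$-moves on $\sigma\cdot J$.
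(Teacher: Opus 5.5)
Your Steps 1 and 3 match the paper. The idea at the end of your obstacle paragraph, transporting the $\Delta$-moves of an auxiliary $J$ onto $\sigma$ by stacking with a concordance inverse, is a workable alternative route. The gap is the one claim that route rests on: that $\sigma\cdot\overline{J}$ (or $\sigma\cdot J$, with $J$ built for the inverse of $\sigma$) has vanishing Milnor invariants of length $\le p$. You justify this by additivity of Milnor invariants on string links whose lower-length invariants vanish. But the factors are only known to have vanishing invariants of length $\le 3$, and $\sigma$ and $J$ may have nonzero invariants of length $4,5,\dots$. So additivity under stacking does not reach all lengths $\le p$ for arbitrary $p$, and you set up no induction that gets around this. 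You also leave open which of your two routes you follow. The first one, "inspect whether the proof produces $J$ from $L$ by $\Delta$-moves", is not yet an argument.

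The missing observation is this. If $v$ is a finite type concordance invariant of degree $\le p$ and $C$ is a fixed string link, then $\tau\mapsto v(\tau\cdot C)$ is again such an invariant. Indeed, stacking with $C$ preserves concordance and sends singular string links with $p+1$ double points to singular string links with $p+1$ double points. Now take $J$ from Theorem \ref{thm:linkconc} applied to $\sigma$. Then $\sigma\cdot\overline{J}$ and $J\cdot\overline{J}$ are not distinguished by these invariants, and the latter is concordant to $\1$. Milnor invariants of length $\le p$ are among these invariants, so they vanish on $\sigma\cdot\overline{J}$. Moreover, $\sigma=\sigma\cdot\1$ is turned into $\sigma\cdot\overline{J}$ by $u_2(\overline{J})=u_2(J)\le\frac{1}{3}(n^3-n)$ $\Delta$-moves, performed inside the cylinder occupied by the trivial factor (a mirror image of a $\Delta$-move is a $\Delta$-move). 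With this, your black-box argument is complete.

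The paper avoids inverses and stacking by using the proof, not the statement, of Theorem \ref{thm:linkconc}. The induction gives an exact presentation $\sigma=\1_{D\cup E_p\cup G_p\cup F_{p+1}}$ of the string link itself. Here $D$ is the union of the $\frac{1}{3}(n^3-n)$ simple $C_2$-trees, $E_p$ is $D$-trivial, $G_p$ is looped, and $F_{p+1}$ is a union of $C_{p+1}$-trees. Deleting the trees of $D$ is a sequence of $\Delta$-moves on $\sigma$ itself. What remains, $\1_{G_p\cup F_{p+1}}$, is $C_{p+1}$-concordant to $\1$ by Lemma \ref{lem:loop}. The paper's route exhibits the $\Delta$-moves explicitly. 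Yours, once repaired, needs only the statement of the theorem and the group structure of string links up to concordance.
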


The proofs of Theorem \ref{thm:linkconc} and Corollary \ref{cor:linkconc} are given in Section \ref{sec:proofconc}. We discuss several extensions of Theorem \ref{thm:linkconc} in Section \ref{sec:moreconc}.

\subsection{Unknotting number for ribbon $2$-knots}\label{sec:introribbon}

Finally, we consider the higher-dimensional version of Theorem \ref{thm:OTY}, that is, for $2$-knots, which are smooth embeddings of  $S^2$ in $S^4$. A major difference with classical knot theory is that there is no notion of crossing change for $2$-knots in general. 
Such a notion however exists for \emph{ribbon $2$-knots}, which are obtained by adding $m$ $1$-handles to a trivial $2$-link with $m + 1$ components, for some $m$.   
Any ribbon $2$-knot admits a (broken surface) diagram where all singularities are circles of double points as in one of the two local pictures shown in Figure \ref{fig:crossing}. We call \emph{ribbon crossing change} the local move on ribbon $2$-knots that exchanges one of these two configurations by the other \cite{HKS}.  
 \begin{figure}[!h]
  \includegraphics[scale=0.5]{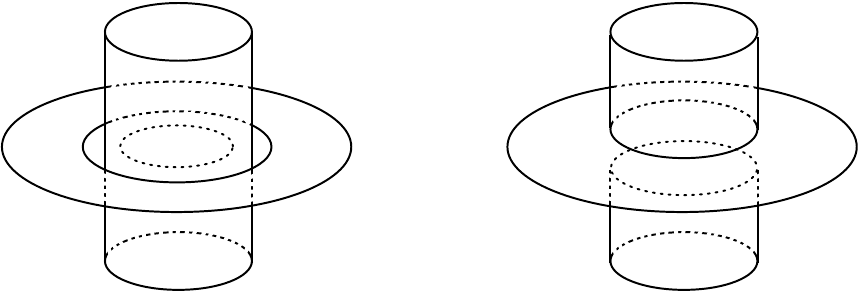}
  \caption{The two types of crossings in a ribbon $2$-knot diagram.}\label{fig:crossing}
 \end{figure}
Any ribbon $2$-knot can be deformed into the trivial one by a sequence of ribbon crossing changes, so that we can define the \emph{ribbon unknotting number} as the minimal number of changes in such a sequence. 

There is a theory of finite type invariants for ribbon $2$-knots based on the ribbon crossing change, developed in \cite{HKS,HS}. It is shown in \cite{HS} (see also \cite{arrow}) that any such finite type invariants is a combination of the coefficients $\alpha_k$ of the normalized Alexander polynomial of $2$-knots (see Section \ref{sec:alex}).
\begin{thmintro}\label{thm:2knots}
Let $R$ be a ribbon $2$-knot and $n$ be an integer ($n\ge 2$). 
There exists a ribbon $2$-knot $J$ with ribbon unknotting number $1$ such that 
 $\alpha_k(R)=\alpha_k(J)$ for all $k\le n$. 
\end{thmintro}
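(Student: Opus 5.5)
The plan is to reduce Theorem \ref{thm:2knots} to a statement about the normalized Alexander polynomial, and then realize the required truncated polynomial by an explicit family of ribbon $2$-knots with ribbon unknotting number $1$. For a ribbon $2$-knot $R$, the normalized Alexander polynomial $\Delta_R(t)$ satisfies $\Delta_R(1)=1$, and every such Laurent polynomial is realized by a ribbon $2$-knot. The invariants $\alpha_k$ are the coefficients of its expansion in $h=t-1$, that is, $\Delta_R(e^h)$ or $\Delta_R(1+h)$, depending on the normalization in Section \ref{sec:alex}. So it suffices to find, for the given $R$ and $n$, a ribbon $2$-knot $J$ with ribbon unknotting number $1$ such that $\Delta_J$ and $\Delta_R$ agree modulo $h^{n+1}$.

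First I identify which polynomials arise from unknotting number one ribbon $2$-knots. I would use the simplest ribbon presentations: a trivial $2$-link with two components joined by one $1$-handle that winds around the spheres. Consider a $1$-handle whose core is a word $w$ in the meridians, going from the first sphere to the second. Changing a single ribbon crossing lets the handle pass through a sphere once. I would arrange the winding so that undoing one crossing unwinds the whole handle, in analogy with twist knots and their single clasp. This produces a family of ribbon $2$-knots $J_f$, one for each Laurent polynomial $f$ with $f(1)=1$ from a suitable class, for instance $f(t)=1+c\,(t-1)^a t^{b}$ or sums of such terms carried by a single crossing. Computing $\Delta_{J_f}$ from the Wirtinger-type presentation $\langle x,y \mid y=w x w^{-1}\rangle$ with Fox calculus gives $\Delta = \pm t^{s}\, \partial(\cdot)/\partial x$, which is governed by the exponent sums in $w$.

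The key step is to show that a single crossing change suffices to realize an arbitrary truncation. Taking $w$ to be an arbitrary word in $x,y$ gives exactly $\Delta=g(t)$, where $g$ ranges over polynomials arising from the Fox derivative of $w$ evaluated at $x=y=t$. Because the handle may wind arbitrarily, I expect every $g$ with $g(1)=1$ to arise, after possibly multiplying by a unit $\pm t^m$. Changing the unique ribbon crossing where the handle first links the sphere should make $w$ conjugate to a power that bounds a disk, giving the trivial $2$-knot. If that fails, I would instead fall back on the idea behind Ohyama's proof of Theorem \ref{thm:OTY}. Write $\Delta_R(t)-1$ modulo $h^{n+1}$, expressed in the basis $(t-1)^j$, and absorb it by a single handle carrying a long word. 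This relies on the observation that ribbon $2$-knots with one fusion band and that band linking only one sphere already realize every polynomial of the form $1+(t-1)q(t)$ with $q(1)=0$.

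The main obstacle is this simultaneous control. The handle must be complicated enough that its Alexander polynomial matches $\Delta_R$ to order $n$, yet become trivial after one ribbon crossing change. I would address it as in the classical case by taking an iterated-commutator-type winding. Let the handle wind as $w=[\dots[[x,y],y],\dots]$, chosen so that it collapses to the trivial word after one crossing is switched. Its Fox derivative contributes $(t-1)^{m}$ terms of arbitrary high order with freely chosen coefficients. Adjusting these coefficients in turn, order by order, by induction on $k\le n$, then matches $\alpha_k(R)$ for all $k\le n$.
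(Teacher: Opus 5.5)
Your reduction to matching $\tilde\Delta$ modulo $(1-t)^{n+1}$ is fine. So is the idea of using $1$-fusion presentations $\langle x,y\mid y=wxw^{-1}\rangle$ with Fox calculus; it would give a genuinely different, more explicit route than the paper's Tube-map/arrow-calculus induction. But the step that carries the whole theorem is never established: you need a word $w$ that realizes prescribed coefficients \emph{and} becomes trivial after a single ribbon crossing change. You only assert this (``I expect'', ``should''), and each concrete mechanism you offer fails. Via the Tube map, a ribbon crossing change at a passage of the band through a base sphere is the virtualization of the corresponding classical crossing (Section \ref{sec:Tube}). In other words, it deletes one letter of $w$. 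For general $w$ this does not trivialize. For $w=xyxy$, deleting the first or the last letter yields the group $\langle x,y\mid xyx=yxy\rangle\not\cong\mathbb{Z}$. Your fallback is false as stated: if the band passes through only one base sphere, $w$ freely reduces to a power of a single meridian and $\Delta=1$. A single iterated commutator $v_m=[\cdots[[x,y],y],\ldots,y]$ (with $m$ letters $y$) does collapse after one deletion. However, its polynomial is the fixed $1+(1-t)^{m+1}$, so there are no coefficients to adjust. If you multiply several such words to adjust them, your argument needs one crossing change per factor.

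The missing idea is to nest all the freedom inside one conjugator, which then plays the role of the paper's single arrow $A$. Take $w=[v,y]=vyv^{-1}y^{-1}$ with $v$ of exponent sum $0$. Deleting the middle $y$ leaves $vv^{-1}y^{-1}$. Free cancellations are welded isotopies (OC moves and Reidemeister II, since the two base arcs are never broken), and the remaining letter is a kink. Hence the Tube image has ribbon unknotting number at most $1$. Fox calculus gives $\Delta_J=1+(1-t)\,\phi(\partial w/\partial x)=1+(1-t)^2\,\phi(\partial v/\partial x)$. The choice $v=(xy^{-1})^{c_0}\prod_{m\ge 1}v_m^{c_m}$ has $\phi(\partial v/\partial x)=\sum_m c_m(1-t)^m$, so $c_m=\alpha_{m+2}(R)$ works. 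In fact, products of blocks $y^axy^{-a-1}$ and their inverses give $\tilde\Delta_J=\tilde\Delta_R$ exactly. With this lemma supplied, your approach becomes a complete proof, slightly stronger than the paper's. The paper, by contrast, never computes $\tilde\Delta$. It uses that the $\alpha_k$ with $k\le n$ are invariant under $w_{n+1}$-equivalence, and absorbs each generator $Y_k$ into $A$ via Lemma \ref{lem:exchange} and the Expansion move. That mechanism is the same one used in the clasper proofs of Theorems \ref{thm:knots}--\ref{thm:linkconc}, which is what the paper's route buys.
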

The proof, given in Section \ref{sec:proofribbon}, makes use of the theory of \emph{welded knotted objects}, see Section \ref{sec:Tube}. 

Although analogues of Theorem \ref{thm:knots} and \ref{thm:linklh} can also be given for ribbon $2$-knots and $2$-links, we will not further develop this direction in this paper. 
\medskip

\noindent \textbf{AI Statement.}
This work was realized without any use of generative AI. 

\section{Preliminaries: a clasper calculus toolbox}\label{sec:prelim}

The proofs of Theorems \ref{thm:knots}, \ref{thm:linklh} and \ref{thm:linkconc} make centrally use of Habiro's clasper theory \cite{Habiro}.
In this section we review all necessary definitions and results required for these proofs. 

Recall that a string link is a pure tangle without closed components in $D^2\times [0,1]$ (see \cite{HL} for a precise definition).  
For a fixed number of components, string links admit a natural composition given by the stacking product, denoted by $\cdot$, and with the trivial string link $\1$ as unit element. 
In what follows we simply denote by $I$ the unit interval $[0,1]$. 

\subsection{Claspers and $C_k$-equivalence}

Let $L$ be a string link in $D^2\times I$.
\begin{defi}
A \emph{clasper for $L$} is an embedded (possibly disconnected) surface $C$ in $D^2\times I$ such that:
\begin{enumerate}
\item[(i)] $C$ decomposes into disks and bands, called \emph{edges}, each connecting two distinct disks;
\item[(ii)]  disks have either 1 or 3 incident edges, and are called \emph{leaves} or \emph{nodes} respectively;
\item[(iii)] $C$ intersects $L$ transversely, and the intersection is contained in the union of the interior of the leaves. 
\end{enumerate}
A clasper for $L$ is \emph{simple} if each of its leaves intersects $L$ at one point.
\\
The degree of a connected clasper is defined as half the number of nodes and leaves.
\end{defi}
\begin{rem}
The definition of claspers given here is somewhat more restrictive that the one given in \cite{Habiro}. In particular, we do not consider \lq boxes' here (see however Appendix \ref{sec:A}); also, what we call here \lq leaves' actually correspond to \lq disk-leaves' in \cite{Habiro}. 
\end{rem}

\begin{defi}
A clasper which is a disk is called a \emph{tree clasper}. 
A degree $k$ tree clasper is called a \emph{$C_k$-tree}. 
\end{defi} 
We will use the usual drawing convention for claspers given in \cite[Fig.~7]{Habiro}, except for the following: a $\oplus$ (resp. $\ominus$) on an edge denotes a positive (resp. negative) half-twist.
\medskip

Given a clasper $C$ for $L$, there is a procedure to construct a framed link $L(C)$ in a regular neighborhood of $C$, see Figures 1 and 6 in \cite{Habiro}. 
The result of surgery on $D^2\times I$ along $L(C)$ is canonically diffeomorphic to $D^2\times I$, so that this surgery operation can be regarded as a local move on $L$ in $D^2\times I$: 
we say that the resulting string link $L_C$ in $D^2\times I$ is obtained by \emph{surgery on $L$ along the clasper $C$}.  
For example, surgery along a simple $C_k$-tree as illustrated in Figure \ref{ckmove} is equivalent to band-summing a copy of the $(k+1)$-component Milnor link (see \cite[Fig. 7]{Milnor}), and is equivalent to a $C_k$-move as defined in Figure \ref{fig:delta}.  
\begin{figure}[!h]
\includegraphics{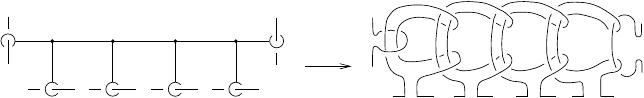}
\caption{Surgery along a linear $C_5$-tree.} \label{ckmove}
\end{figure}
\begin{defi}\label{def:linear}
A simple $C_k$-tree having the shape of the tree clasper in Figure \ref{ckmove} is called \emph{linear}, 
and the left-most and right-most leaves in the figure \ref{ckmove} are called the \emph{ends}.  
\end{defi}

The \emph{$C_k$-equivalence} as defined in the introduction coincides with the equivalence relation generated by surgery along degree $k$ claspers and isotopies; we denote it by  $\stackrel{C_{k}}{\sim}$. 
Habiro proved that the $C_k$-equivalence relation becomes finer as $k$ increases, 
and that two (string) links are $C_k$-equivalent if and only if they are related by surgery along
simple $C_k$-trees \cite{Habiro}. 
\medskip

Recall that a leaf of a clasper $C$ for a (string) link $L$ is a \emph{trivial leaf} if its
boundary bounds a disk whose interior is disjoint from $L\cup C$. 
\begin{lem}[Trivial leaf]\label{lem:trivial}
Let $C$ be a connected clasper for a (string) link  $L$. If $C$ contains a trivial leaf, then $L_C$ is equivalent to $L$.
\end{lem}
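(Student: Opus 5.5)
The plan is to reduce the lemma to the simplest configuration using Habiro's move calculus, and then to an explicit Kirby calculus computation. Let $f$ be the trivial leaf of the connected clasper $C$, and let $D$ be a disk bounded by $\partial f$ whose interior misses $L\cup C$. Since $D$ misses $L$, the leaf $f$ carries no strand of $L$. Let $e$ be the unique edge incident to $f$.

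If $e$ joins $f$ to another leaf $f'$, then $C$ is a $C_1$-tree, i.e. a single edge with two leaves $f,f'$. The associated framed link $L(C)$ is a two-component link. One component $\gamma$ is a $0$-framed meridian of the other component $\gamma'$, and $\gamma$ is the curve parallel to $\partial f$, so it bounds a disk (a push-off of $D$) that meets $\gamma'$ once and misses $L$. Surgery on such a pair (a Hopf-type pair with a $0$-framed meridian) leaves the ambient space unchanged, and it can be performed inside a ball disjoint from $L$ after sliding $\gamma'$ off $L$ using $\gamma$. Concretely, we slide every strand of $L$ passing through $f'$ over $\gamma$; the disk bounded by $\gamma$ lets us unlink the strands from $\gamma'$. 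The pair then cancels, so $L_C\cong L$.

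If instead $e$ joins $f$ to a node $v$, I apply Habiro's zip/splitting principle in its simplest form. Recall that the node $v$ is replaced in $L(C)$ by Borromean rings: three components each clasping one of the incident edges. The component of $L(C)$ coming from $e$ and $f$ is a $0$-framed unknot $\gamma$ bounding a disk that meets only one Borromean component $\beta$, at two points of opposite sign. Equivalently, the disk meets $\beta$ along a band, so $\beta$ is the band-sum of a meridian pair. Sliding $\beta$ over $\gamma$ turns the Borromean triple into a configuration in which $\beta$ splits off as a component that is trivial in the complement of the other two. Then $\beta\cup\gamma$ is a cancelling pair and can be deleted, and the two remaining Borromean components become unlinked $0$-framed unknots, each of which is a meridian of an edge of the remaining graph.

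After this step the two edges of $v$ other than $e$ end in new trivial leaves. Since $C$ is connected, every piece has a leaf adjacent to the trivial one, so we can induct on the number of nodes. Each inductive step removes one node and produces claspers with fewer nodes, each having a trivial leaf, and we finish with the base case of a single edge. For claspers that are not trees we run the same reduction, since a trivial leaf still produces a $0$-framed unknot whose bounding disk meets only the adjacent node's component.

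The main obstacle is the second case: checking that a leaf bounding a disk disjoint from everything makes the Borromean node surgery trivial, and keeping track of framings. A cleaner route, which I would try first, is to cite Habiro's moves directly, namely that a node with a trivial leaf can be deleted by splitting into two claspers each having a trivial leaf. This is \cite[Prop.~3.3]{Habiro}, and the lemma follows from it by induction.
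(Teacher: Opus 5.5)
The paper gives no proof of this lemma: it recalls it as part of Habiro's standard clasper calculus. So your fallback of citing Habiro is what the paper does, and your overall architecture (cancel a $0$-framed meridian pair, then induct on nodes) is the standard one. However, your self-contained treatment of the node case fails as written, for two reasons.

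\emph{The local picture is wrong.} In $L(C)$ the curve $\gamma$ coming from $f\cup e$ forms a single Hopf clasp with the Borromean component $\beta$ at $v$. The disk formed by $f$ and the half of $e$ up to the clasp meets $\beta$ in exactly one point, not in two points of opposite sign. Under your description one would have $\mathrm{lk}(\gamma,\beta)=0$. Then $\gamma$ is not a meridian of $\beta$, and $\gamma\cup\beta$ is not a cancelling pair.

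\emph{The sliding step cannot work.} Sliding $\beta$ over $\gamma$ can never split $\beta$ off. Since $\gamma$ bounds a disk missing the other two Borromean components $\beta_2,\beta_3$, such slides change $\beta$ only by a homotopy in the complement of $\beta_2\cup\beta_3$. That preserves $\bar\mu(123)=\pm1$ of the triple, so $\beta$ stays linked. To unlink it you would have to slide $\beta_2,\beta_3$ over $\gamma$ instead.

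The correct picture makes the node case identical to your base case, with no splitting needed. Let $\Delta$ be the disk formed by $f$ and half of $e$ (this is also the right disk in your base case, rather than a push-off of $D$). Then $\gamma$ is a $0$-framed curve bounding $\Delta$, and $\Delta$ meets $L\cup L(C)$ only in one point of $\beta$. By the slam-dunk, surgery on $\gamma\cup\beta$ is trivial via a homeomorphism supported in a neighbourhood of $\beta\cup\Delta$. This neighbourhood misses $L$ and all other components, so $L$ and the rest of $L(C)$ are fixed. Deleting $\gamma\cup\beta$ leaves $\beta_2\cup\beta_3$, which is a trivial link, and each $\beta_i$ is a $0$-framed meridian of its clasp partner; that is, you get trivial leaves at the ends of $e_2$ and $e_3$.

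Finally, cutting at $v$ can produce two claspers that are linked with each other, so your induction must be run on a relative statement. If a connected component $C$ of a clasper $F$ for $L$ has a leaf bounding a disk disjoint from $L\cup F$, then $L_F\cong L_{F\setminus C}$. The same argument, using the supports of the homeomorphisms, proves this, and it also covers looped claspers.
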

The following terminology will be useful in the next sections.
\begin{defi}\label{def:Btrivial}
Let $B\cup C$ be a clasper for a (string) link  $L$. We say that the clasper $C$ is \emph{$B$-trivial} if each component of $C$ has at least one trivial leaf when deleting $B$.   
\end{defi}
See the right-hand side of Figure \ref{fig:tech} for an example, where the clasper $C$ is $T$-trivial. 

\subsection{Calculus of claspers}

We next collect several classical results proved in \cite[\S 4]{Habiro}, which allow to manipulate and \lq simplify' claspers up to higher degree ones. In all statements, $\1$ still denotes the trivial string link, for some given number of components. 
\begin{lem} \label{lem:calculus}
Let $F$ be a clasper for a (string) link $L$, a let $C$ be a degree $k$ clasper in $F$.
Let $F'=(F\setminus C)\cup C'$, where $C'$ is obtained from $C$ by either of the following operations:\\[-0.1cm]
\begin{minipage}{9.5cm}
\begin{itemize}
\item passing an edge of $C$ across $L$;
\item passing an edge of $C$ across an edge of some clasper in $F$ (possibly $C$ itself);  
\item sliding a leaf of $C$ over a leaf of another component of $F$, as shown on the right. 
\end{itemize}
\end{minipage}
\begin{minipage}{3.5cm}
\,\,\,\,\,\, \includegraphics[scale=1.5]{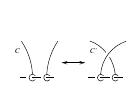}\\[-0.45cm]
\end{minipage}
We have $$L_F \stackrel{C_{k+1}}{\sim} L_{F'}.$$  
\end{lem}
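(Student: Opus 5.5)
We prove Lemma \ref{lem:calculus} by treating each of the three operations separately. In each case $F'$ differs from $F$ only in a small ball. The goal is to rewrite surgery on $F$ as surgery on $F'$ together with an extra clasper of degree $\ge k+1$. Since surgery along a clasper of degree $\ge k+1$ is realized by $C_{k+1}$-moves, this gives the claimed $C_{k+1}$-equivalence. Throughout we use the standard moves of Habiro \cite[\S 2]{Habiro}. These include the zip construction, which splits a clasper along a leaf into a union of tree claspers; the fact that a Hopf-linked pair of leaves can be exchanged for a crossing; and the fact that two claspers differing by a crossing change between an edge and something else differ by surgery along an extra ``commutator'' clasper obtained by inserting a new node.

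\emph{Step 1: reduction to tree claspers.} By the zip construction \cite[Prop.~3.4]{Habiro}, surgery along the degree $k$ clasper $C$ equals, up to $C_{k+1}$-equivalence, surgery along a disjoint union of tree claspers of degree $\ge k$ in a neighborhood of $C$. Edges and leaves of $C$ are inherited by these trees. It therefore suffices to treat a $C_k$-tree $C$, applying the move to each tree in turn and using that $C_{k+1}$-equivalence is an equivalence relation preserved under the local insertion of the other claspers of $F$.

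\emph{Step 2: edge crossing $L$ or an edge.} A crossing change between an edge $e$ of $C$ and a strand $s$ (of $L$ or of another edge) is realized by surgery along a $C_1$-tree $T$ whose two leaves are small disks around $e$ and $s$. Replacing the Hopf clasp by $T$ and using the leaf-on-edge rule, $T$ together with $C$ can be rewritten as $C'$ plus a tree obtained by attaching $T$'s edge to a new node on $e$. Here $C'$ is $C$ with the crossing changed. The new tree has degree $k+1$ if $s\subset L$. If $s$ is an edge of a clasper of degree $d\ge1$, the degree is $k+d\ge k+1$. When $s\subset C$ itself the same count works, because the new component still contains all of $C$'s nodes and leaves plus at least one more. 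Surgery along it is a $C_{k+1}$-move, by \cite[Prop.~3.22]{Habiro}, or directly by the definition of $C_{k+1}$-equivalence.

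\emph{Step 3: leaf slides.} Sliding a leaf $f$ of $C$ over a leaf $g$ of another component $D$ changes $f$ into $f\#g$. Using the zip construction on the band-sum leaf, $L_{F}$ is rewritten as $L_{F'}$ together with a clasper obtained by fusing $C$ and $D$ at a new node, whose degree is $\deg C+\deg D\ge k+1$. Equivalently, we can reduce to Step 2: sliding $f$ over $g$ amounts to passing the edge of $f$ through the strands linked with $g$, modulo surgery on $D$, and then collecting the higher-order terms.

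The main obstacle is Step 2. We need the precise identification of the commutator clasper and a check that its degree is at least $k+1$ even in the self-crossing case, where framing and half-twist conventions must be tracked. We would follow Habiro's Proposition 3.4 and Figures 34--36 for this; everything else is bookkeeping of degrees.
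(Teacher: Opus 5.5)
Your sketch is correct and is essentially the standard argument of \cite[\S 4]{Habiro}: realize the edge crossing change by surgery along a $C_1$-tree, absorb its leaf into the edge via move 12 and the zip construction, and handle leaf slides by zipping the band-summed leaf. This is exactly what the paper relies on, since it gives no proof of its own and simply cites Habiro; the only slips are harmless (in Step 2 it is $C$, not $C'$, plus the new tree that is equivalent to $C\cup T$, and for an edge of a degree $d$ clasper the fused clasper has degree $k+d+1$ rather than $k+d$, still at least $k+1$).
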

 
\begin{lem}[Twist] \label{lem:twist}
 Let $C$ be a degree $k$ clasper for $\1$ and let $C^{-1}$ be obtained from $C$ by adding a half-twist on some edge. We have  
 $$\1_C\cdot \1_{\C^{-1}} \stackrel{C_{k+1}}{\sim} \1.$$  
As a consequence, one can freely move a half-twist among the edges of a $C_k$-tree up to $C_{k+1}$-equivalence. 
 \end{lem}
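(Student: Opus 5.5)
The plan is to write $\1_C\cdot\1_{C^{-1}}$ as surgery on $\1$ along the disjoint union $C\sqcup C^{-1}$. Before composing, we place $C^{-1}$ as a parallel copy of $C$ pushed slightly off along its normal direction; the copy differs from $C$ only by the extra half-twist on one edge $e$. By Lemma \ref{lem:calculus}, passing edges of $C^{-1}$ across $L$ and across edges of $C$, and sliding its leaves, changes the result only up to $C_{k+1}$-equivalence. So we may isotope the stacked clasper $C^{-1}$ (which lives in the lower or upper half of the cylinder) until it is a parallel copy of $C$ in the same region. Each leaf of $C^{-1}$ then sits next to the corresponding leaf of $C$ and grabs the same strands.

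The key step is to show that surgery along two such parallel claspers is trivial. This can be done with Habiro's moves. Near the edge $e$, the two parallel edges with a relative half-twist can be rewritten, via Habiro's edge-cutting and leaf-fusion moves, as a single clasper containing a trivial leaf. Concretely, there are two options. One is to take one leaf $\ell$ of $C$ and its parallel $\ell'$ of $C^{-1}$, which together form an annulus intersecting $L$, and fuse them into one leaf. The other, better option is to note that a leaf of $C$ together with the oppositely twisted parallel leaf of $C^{-1}$ is equivalent to a single clasper one of whose leaves cobounds a disk disjoint from everything. Lemma \ref{lem:trivial} then kills it.

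The cleanest route is probably the base case: for a $C_1$-tree (a single edge with two leaves), surgery is a clasp, and the half-twisted version is the opposite clasp, so the two cancel by isotopy. For general $k$, we use Habiro's move expressing a node edge as a product. Alternatively, we induct on the number of nodes, splitting a node into a degree-one clasper with a box and applying the $C_1$ case inside a regular neighbourhood. Errors from moving edges across the other copy are of degree at least $k+1$ by Lemma \ref{lem:calculus}.

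For the consequence, we move a half-twist from edge $e$ to edge $e'$. Adding half-twists on both $e$ and $e'$ corresponds, by the first part applied twice, to $\1_C$ up to $C_{k+1}$: with $\1_{C_e}\sim \1_C^{-1}\sim \1_{C_{e'}}$, the inverse being well defined modulo $C_{k+1}$ since $C_k$-equivalence classes modulo $C_{k+1}$ form a group. Hence twisting on $e$ or on $e'$ gives the same result. The main obstacle is the middle step: rigorously reducing the parallel pair to a clasper with a trivial leaf. I would first try the induction on degree using the node-splitting move.
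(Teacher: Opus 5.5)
The paper gives no proof of this lemma: it is imported from Habiro's calculus of claspers \cite[\S 4]{Habiro}. Judged on its own, two parts of your proposal are sound. Moving $C^{-1}$ into a position parallel to $C$ is legitimate, since every move involved is covered by Lemma \ref{lem:calculus}. Deducing the second assertion from the first is also fine, using that $C_k$-trivial string links form a group modulo $C_{k+1}$. However, the central claim is never established: that two parallel copies of $C$ differing by a half-twist on an edge $e$ give a $C_{k+1}$-trivial surgery. You list several options and then name this step as the main obstacle, but it is the whole content of the lemma.

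Moreover, the one concrete mechanism you single out does not work in general.
\begin{itemize}
\item Fusing a leaf $\ell$ of $C$ with its parallel leaf $\ell'$ of $C^{-1}$ gives a trivial leaf only when $\ell'$ meets the strand with the opposite sign. That happens exactly when the extra half-twist sits on the edge incident to $\ell$; then $\partial(\ell\natural\ell')$ is a meridian band-summed with a reversed meridian, which bounds a disk disjoint from $\1$.
\item For $k\ge 3$, a $C_k$-tree has internal edges, and the looped claspers to which the paper applies the lemma (the $\overline{G_{ij}}$ in the proof of Theorem \ref{thm:linkconc}) have edges on cycles. If the twist is on such an edge, $C$ and $C^{-1}$ do not differ only near a leaf, so reverse leaf splitting does not apply. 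Any fused leaf then meets the strand twice with the same sign and is not trivial. Your ``first option'' has the same defect.
\item Reducing to the leaf-edge case means transporting the twist along the tree. That is precisely the second assertion of the lemma, which you derive from the first, so the argument is circular.
\item Transporting the twist by isotopy (flipping nodes) reverses cyclic orders at nodes. You would then need the AS relation, which, once the leaf-edge case is known, is equivalent to the internal-edge case you are trying to prove.
\end{itemize}

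Two further points. Even in the leaf-edge case, fusion is not a single exact Habiro move: splitting a leaf produces a box, and separating it into two disjoint parallel copies requires the zip construction and holds only modulo $C_{k+1}$. This step should be stated and justified. The proposed induction via ``splitting a node into a degree-one clasper with a box'' is not specified enough to check, and ``Habiro's move expressing a node edge as a product'' is not an actual move.

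To close the gap you need an argument that handles a half-twist on an arbitrary edge directly, without presupposing the ability to move twists or the AS relation. Alternatively, simply invoke \cite[\S 4]{Habiro}, which is all the paper does.
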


The next lemma is a clasper versions of the AS, IHX and STU relations of Jacobi diagrams. 
\begin{lem}[AS, IHX and STU] \label{lem:asihx}
Let $T$ and $T'$ (resp. $T_I$, $T_H$ and $T_X$, resp. $G_S$, $G_T$ and $G_U$)  be  degree $k$ claspers for $\1$ which differ only in a small ball  as depicted in Figure \ref{fig:clasper_calc}. 
 \begin{figure}[!h]
  \includegraphics[scale=1]{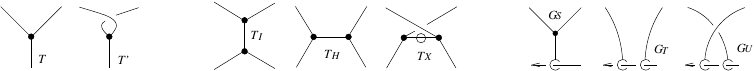}
  \caption{AS (left), IHX (center) and STU (right) relations for claspers.}\label{fig:clasper_calc}
 \end{figure}
We have 
$$\textrm{$\1_{T}\cdot \1_{T'} \stackrel{C_{k+1}}{\sim}\1$, $\quad \1_{T_I} \stackrel{C_{k+1}}{\sim} \1_{T_H}\cdot \1_{T_X}\quad $ and $\quad  \1_{G_S}\cdot \1_{G_T} \stackrel{C_{k+1}}{\sim} \1_{G_U}$.} $$ 
\end{lem}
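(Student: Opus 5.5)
The AS, IHX and STU relations for claspers are among the classical results collected from \cite[\S 4]{Habiro}, so in the paper they are essentially quoted. Still, here is how I would derive them from Lemmas \ref{lem:calculus}, \ref{lem:twist} and \ref{lem:trivial}. The idea is to reduce each relation to a local computation near a node or a leaf, using the zip construction and the fact that moves on a degree $k$ clasper only matter modulo $C_{k+1}$.

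\emph{AS.} $T'$ differs from $T$ by exchanging two edges at a node. I would realize this exchange by an isotopy of the clasper that rotates a neighbourhood of the node by a half turn. Up to isotopy, this rotation amounts to inserting a half-twist on the third incident edge. So $T'$ is obtained from $T$ by adding a half-twist on one edge, and Lemma \ref{lem:twist} gives $\1_T\cdot \1_{T'}\stackrel{C_{k+1}}{\sim}\1$ directly.

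\emph{STU.} This is the more geometric step. $G_S$ has a node whose two adjacent edges end in leaves $f_1,f_2$ grasping the same strand at nearby points. $G_T$ and $G_U$ are the two degree $k$ claspers in which the two edges attach directly to the strand, in the two possible orders along it. I would start from $G_U$ and split the leaf region: replace the node by a single leaf $f$ whose disk meets the strand in two points, then apply Habiro's leaf-splitting move (the zip construction) to separate $f$ into two parallel leaves. This expresses $\1_{G_U}$ as surgery along a disjoint union of two claspers that differ only near the strand, namely $G_S$ and $G_T$, stacked, up to passing edges across each other and across the strand. Lemma \ref{lem:calculus} controls those crossings modulo $C_{k+1}$. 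Any auxiliary pieces produced by the zip have a trivial leaf and vanish by Lemma \ref{lem:trivial}. Stacking order can be permuted because degree $k$ claspers commute modulo $C_{k+1}$, again by Lemma \ref{lem:calculus}.

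\emph{IHX.} I would deduce this from STU together with AS. Push the internal edge of $T_I$ towards a strand by introducing an auxiliary leaf. More concretely, cut the internal edge of $T_I$ at a point where it meets a small trivial disk around a strand. Apply STU twice, on both sides, to rewrite each of $T_I,T_H,T_X$ as a product of claspers whose relevant trivalent part has been moved to a leaf on the strand. The resulting expansions match term by term after AS sign corrections, which gives $\1_{T_I}\stackrel{C_{k+1}}{\sim}\1_{T_H}\cdot\1_{T_X}$. Alternatively, I would invoke Habiro's direct proof via the zip construction applied to the Borromean-type clasper realizing the Jacobi identity, if that proves cleaner.

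\emph{Main obstacle.} I expect the hardest part to be the careful bookkeeping of framings and half-twists in the zip step for STU. One must check that the new claspers carry exactly the orientations dictated by Figure \ref{fig:clasper_calc}, and that every error term has degree $\ge k+1$. I would handle this by first proving the case $k=1$ explicitly by a picture, then inducting by grafting the remaining tree onto the local model.
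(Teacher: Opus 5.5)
The paper gives no proof of this lemma: it is quoted from Habiro's \S 4, so a citation would suffice. The derivations you offer instead have a genuine gap. Your AS argument is essentially right, with one inaccuracy. Flipping a neighbourhood of the node by a half turn about the third edge puts a half-twist on \emph{each} of the three incident edges (while undoing the crossing of the other two), not on the third edge alone. You therefore need Lemma~\ref{lem:twist} three times, together with the fact that string links modulo $C_{k+1}$ form a group, to conclude $\1_{T'}\stackrel{C_{k+1}}{\sim}(\1_T)^{-1}$.

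The STU step fails, for two reasons.

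First, the configurations are misassigned. $G_T$ and $G_U$ are the claspers with two adjacent leaves $f_1,f_2$ on the strand, in the two orders. $G_S$ has a \emph{single} leaf on the strand, joined by an edge to a new node whose other two edges are the edges $e_1,e_2$ that formerly ended at $f_1,f_2$. This is why $G_S$ is looped when $G_T$ is a tree, as used in Lemma~\ref{cor:loop}(2).

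Second, your mechanism cannot produce that node. Replacing a node by a leaf meeting the strand twice is not a clasper move. Splitting a leaf that meets the strand in two points only yields claspers with the same underlying graph as the original, with that leaf replaced by one of its pieces. Lemma~\ref{lem:calculus} cannot help either, since $G_S$ has degree $k$ while that lemma only controls corrections of degree $\ge k+1$.

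The missing ingredient is the leaf--edge interaction. Pushing $f_1$ past $f_2$ along the strand forces $f_1$ across the edge $e_2$, so $f_1$ becomes a band sum of a small leaf around the strand and a small leaf around $e_2$. Splitting it gives $G_T$ together with a clasper having a leaf that encircles $e_2$. You then need Habiro's move~12 and the zip construction (as in the proof of Lemma~\ref{lem:tech} in Appendix~\ref{sec:A}) to trade a leaf encircling an edge for a node inserted on that edge, which is exactly $G_S$.

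Your IHX sketch relies on this STU. Its first step, cutting the internal edge where it meets a trivial disk around a strand, is also not a legitimate operation. A Bar-Natan style reduction STU$\Rightarrow$IHX would instead apply STU (twice) at a leaf adjacent to the IHX region, and induct on the distance from that region to the strand.
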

We will make use of the following standard consequence of the AS and IHX relations. 
\begin{cor}\label{cor:asihx}
 Let $T$ be a simple $C_k$-tree for $\1$. Pick two leaves of  $T$, intersecting the $i$th and $j$th components of $\1$ (possibly $i=j$).  Then $\1_T\stackrel{C_{k+1}}{\sim} \prod_k \1_{L_k}$, where each $L_k$  is a linear $C_k$-tree whose ends intersect the $i$th and $j$th components of $\1$. 
\end{cor}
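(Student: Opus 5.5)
We argue by induction on the number of internal vertices (nodes) of $T$ that separate the two chosen leaves $\ell_i$ and $\ell_j$ from being the ends of a linear tree. Write $T$ as a uni-trivalent tree and consider the unique path $\gamma$ in $T$ joining $\ell_i$ to $\ell_j$. The tree $T$ is linear with ends $\ell_i,\ell_j$ exactly when every node of $T$ lies on $\gamma$ and each such node has its third edge ending directly at a leaf. Our measure of complexity is the total number of nodes of $T$ off the path $\gamma$ (equivalently, the sum over the nodes on $\gamma$ of the number of nodes in the subtree hanging off that node). If it is zero, $T$ is already linear with the required ends, up to possibly reversing the cyclic orders at nodes. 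These reversals are handled by the AS relation of Lemma \ref{lem:asihx}, which changes $T$ into a half-twisted copy. Lemma \ref{lem:twist} then moves all half-twists to a single edge, so the tree is realized (up to $C_{k+1}$-equivalence) as a linear $C_k$-tree, possibly with a half-twist, which is still a linear tree in our sense.

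For the inductive step, pick a node $v$ on $\gamma$ whose off-path subtree $S$ is not a single leaf. The root node $w$ of $S$ is adjacent to $v$, and the edge $vw$ is the ``middle'' edge of an I-shaped configuration. Its four outer branches are: the two path-directions at $v$ (toward $\ell_i$ and toward $\ell_j$), and the two subtrees $S_1,S_2$ at $w$. Applying the IHX relation of Lemma \ref{lem:asihx} to this edge gives
\[
\1_{T}\stackrel{C_{k+1}}{\sim}\1_{T_H}\cdot \1_{T_X}.
\]
In both $T_H$ and $T_X$, one of $S_1,S_2$ is attached directly to the path $\gamma$ through a new node on $\gamma$, and the other is attached at a second new node on $\gamma$. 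Hence the path from $\ell_i$ to $\ell_j$ gains one node, and the number of off-path nodes drops by one, since $w$ has been absorbed into $\gamma$. The leaves, and hence the components they meet, are unchanged. In particular $\ell_i,\ell_j$ still meet the $i$th and $j$th components, and each new tree is again a simple $C_k$-tree for $\1$.

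We then apply the induction hypothesis to each factor. To do this we need that $\stackrel{C_{k+1}}{\sim}$ is compatible with stacking products: if $\1_{T_H}\stackrel{C_{k+1}}{\sim}\prod\1_{L}$, then $\1_{T_H}\cdot\1_{T_X}\stackrel{C_{k+1}}{\sim}(\prod \1_L)\cdot \1_{T_X}$, and likewise for the other factor. This holds because $C_{k+1}$-moves are local, so $C_{k+1}$-equivalence is a congruence for the stacking product. Iterating until no off-path nodes remain yields $\1_T\stackrel{C_{k+1}}{\sim}\prod \1_{L_k}$ with each $L_k$ linear with ends on components $i$ and $j$. The case $i=j$ needs no change, since the argument only tracks the leaves and never uses that they meet distinct components.

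The only delicate point is the bookkeeping in the IHX step. We must check that, in both $T_H$ and $T_X$, the edge being resolved is replaced so that $\ell_i$ and $\ell_j$ remain on opposite sides and the complexity strictly decreases. With the choice above this holds: the two path-branches at $v$ are separated in $T_H$ and $T_X$ by the new middle edge, which lies on the new $\gamma$. The signs and orientation conventions produced along the way are all absorbed by the AS and twist lemmas.
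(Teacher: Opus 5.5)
Your proof is correct and is essentially the argument the paper intends. The paper writes no proof and only calls the statement a standard consequence of the AS and IHX relations (Lemma \ref{lem:asihx}); your induction on the number of nodes off the path between the two chosen leaves is exactly that consequence. The count drops by one at each IHX step because both $T_H$ and $T_X$ separate the two path-branches at $v$, and the AS/Twist lemmas absorb the remaining half-twists.
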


We now state a technical result which shall play a pivotal role in the proofs of our main results. 
The proof requires several extra ingredients from clasper theory, and is postponed to Appendix \ref{sec:A}.
\begin{lem}\label{lem:tech}
Let $T\cup P$ be a union of a $C_k$-tree and a $C_m$-tree for $\1$ ($m\ge k+1\ge 2$) which is as shown on the left-hand side of Figure \ref{fig:tech} in a neighborhood of $T$. Then $$1_{T\cup P} \stackrel{C_{m+1}}{\sim} \1_{T\cup C},$$
where $C$ is the $T$-trivial  $C_{m-k}$-tree shown in the right-hand side of the figure. 
 \begin{figure}[!h]
  \includegraphics[scale=0.9]{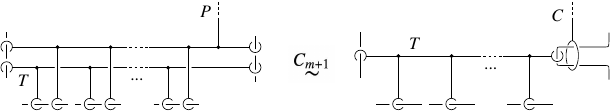}
  \caption{Lemma \ref{lem:tech}}\label{fig:tech}
 \end{figure}
\end{lem}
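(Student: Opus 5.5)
We have to guess what Figure \ref{fig:tech} shows. Presumably the $C_m$-tree $P$ has a leaf grabbing (clasping) an edge or leaf of the $C_k$-tree $T$. The claim is then that, up to $C_{m+1}$-equivalence, $P$ can be replaced by a $C_{m-k}$-tree $C$ whose leaves sit near $T$ so that deleting $T$ leaves a trivial leaf. In other words, $P$ "absorbs" part of $T$: its degree drops by $k$, but it stays trapped by $T$.

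The plan is to express the interaction via Habiro's box or zip construction, which is why the appendix brings in boxes.

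\textbf{Step 1: unzip $T$.} Recall Habiro's move: a leaf of a clasper meeting an edge of another tree can be rewritten using a box. Concretely, a leaf $\ell$ of $P$ encircling an edge $e$ of $T$ is equivalent to inserting a node in $e$, that is, a tree whose degree adds up. I would use the zip construction in the form of \cite[Prop.~3.4 and \S 4]{Habiro}. Think of $T\cup P$ as a single graph clasper obtained by splitting along a Hopf pair. Iterating this move node by node along $T$ (fission/fusion), the leaf of $P$ linked to $T$ is pushed through the nodes of $T$ toward its leaves. Each time it crosses a node it doubles, producing extra tree components. Each pass through a node raises the degree of the extra components. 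After crossing all $k$ nodes and reaching the leaves of $T$, everything of degree $>m$ is discarded as $C_{m+1}$-trivial.

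\textbf{Step 2: degree bookkeeping.} Count carefully which components survive modulo $C_{m+1}$. The sum of the degrees of the two clasped parts is what matters. Because $P$ already has degree $m$, every component created when the leaf of $P$ passes a node of $T$ has degree at least $m+1$, except the one in which a block of $T$ of degree $k$ is effectively cancelled. In that component the clasping with $T$ persists only via leaves parallel to the leaves of $T$. The net effect is a tree $C$ of degree $m-k$ whose leaves are parallel copies of meridional disks around the leaves or edges of $T$, exactly as in the right-hand picture. Lemma \ref{lem:calculus} and the twist Lemma \ref{lem:twist} are used to adjust edges and half-twists at cost $C_{m+1}$.

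\textbf{Step 3: check $T$-triviality.} After deleting $T$, the leaf of $C$ that encircled $T$ bounds a disk disjoint from everything, so $C$ is $T$-trivial. This is consistent with Lemma \ref{lem:trivial}.

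\textbf{Main obstacle.} The delicate part is Step 1–2. One must justify rigorously that the move through each node of $T$ produces only the stated component modulo $C_{m+1}$. Degree counting is subtle here because $T\cup P$ is disconnected, so the usual estimate "degree of the union" fails. I would handle this by first converting the clasp into a box (Habiro's Move 9/10), so that $T\cup P$ becomes a single connected graph clasper of degree $k+m$ with a box. I would then apply the zip construction \cite[Prop.~3.4]{Habiro}, tracking degrees via the number of nodes. The hypothesis $m\ge k+1$ guarantees that $m-k\ge 1$, so $C$ is a genuine tree.
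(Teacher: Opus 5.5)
Your proposal misreads the configuration, and this breaks the degree count in Step 2. In the lemma it is $C$, not $P$, that clasps $T$. That $C$ is $T$-trivial means a leaf $f$ of $C$ can be isotoped off $\1$ so as to meet an edge $e$ of $T$ in a single point. The $C_m$-tree $P$ is instead the one that, near $T$, runs parallel to $T$: it contains a copy of $T$ whose leaves sit next to those of $T$, with the rest of $P$ branching off the copy of $e$.

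Under your reading the statement would be false. If a leaf of the $C_m$-tree $P$ clasped an edge of $T$, zipping would give $\1_{T\cup P}\stackrel{C_{m+k}}{\sim}\1_T$, hence $\1_{T\cup P}\stackrel{C_{m+1}}{\sim}\1_T$. But $\1_{T\cup C}$ differs from $\1_T$ by a $C_m$-tree that is in general nontrivial modulo $C_{m+1}$; this is exactly what makes the lemma useful in Section~\ref{sec:knot}. Your Step 2 relies on a component in which a degree-$k$ block of $T$ is `effectively cancelled', so that the degree drops from $m$ to $m-k$. Nothing produces such a cancellation: when a leaf clasping an edge is zipped through a tree, degrees add and never subtract. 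Your own `main obstacle' paragraph, which arrives at a connected clasper of degree $k+m$, already shows this.

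The argument has to run in the opposite direction, where the degrees add up exactly. Start from $T\cup C$ and isotope $f$ to meet $e$ once. Apply Habiro's move 12 at $f$ to create two boxes, then push the boxes through the nodes of $T$ (move 12) and its leaves (move 6). This grows a parallel copy of $T$ inside $C$ and produces a $C_{(m-k)+k}=C_m$-tree $P'$. It agrees with $P$ up to a pair of half-twists and some small leaves linking its edges. Those leaves are unlinked by Lemma~\ref{lem:calculus} at the cost of $C_{m+1}$-equivalence; they are then trivial, and move 3 turns the boxes back into edges. The half-twists are removed by Lemma~\ref{lem:twist}, and the result is $T\cup P$. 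Your Step 3 is not needed: $T$-triviality of $C$ is part of the description of $C$, not something to prove.
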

\begin{rem}\label{rem:tech}
 We also need the following mild variation of Lemma \ref{lem:tech}, whose proof is strictly similar. 
 Let $T\cup P$ be a union of a $C_k$-tree and a $C_m$-tree for $\1$ ($m\ge k+1\ge 3$) which is as shown on the left-hand side of Figure \ref{fig:techbis} in a neighborhood of $T$. Then $1_{T\cup P} \stackrel{C_{m+1}}{\sim} \1_{T\cup C}$, 
where $C$ is the $T$-trivial   $C_{m-k}$-tree shown in the right-hand side of Figure \ref{fig:techbis}. 
  \begin{figure}[!h]
  \includegraphics[scale=0.9]{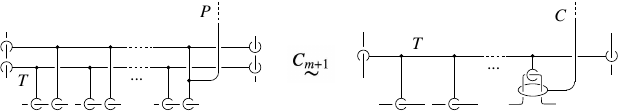}
  \caption{A variant of Lemma \ref{lem:tech}}\label{fig:techbis}
 \end{figure}
\end{rem}

\subsection{Claspers and link-homotopy}\label{sec:ck+h}

Recall that link-homotopy is the equivalence relation on links generated by self-crossing changes \cite{Milnor}. 
\begin{lem}\label{lem:lhclasper} \cite{FY}
Two (string) links are link-homotopic if and only if they are related by surgery along \emph{repeated} claspers. 
\end{lem}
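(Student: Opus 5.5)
The statement has two directions, and I would treat them separately. Recall that a clasper is \emph{repeated} if it has (at least) two leaves meeting the same component of the link. The direction \lq\lq link-homotopic $\Rightarrow$ related by repeated claspers'' is immediate. A self-crossing change of the $i$th component is surgery along a $C_1$-tree whose two leaves are small disks meeting that component once each, and such a $C_1$-tree is repeated. Isotopies are harmless, so any link-homotopy is a sequence of surgeries along repeated claspers.

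For the converse, I must show that if $C$ is a connected repeated clasper for $L$, then $L_C$ is link-homotopic to $L$; by induction on the number of surgeries, one clasper at a time suffices. I would first reduce to tree claspers. Cut each edge lying on a cycle of $C$ by inserting a pair of Hopf-linked leaves (Habiro's move). This replaces $C$ by a union of tree claspers with the same surgery effect, and each of them carries all the original leaves that lay on its side. Choosing which edges to cut, I would arrange that the two leaves $f,f'$ of $C$ on the $i$th component $K_i$ end up in the same tree, which is then repeated. The remaining pieces only interact with it through Hopf leaves, so I expect them to be disposable by the trivial leaf lemma (Lemma \ref{lem:trivial}) once the repeated tree has been dealt with. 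This bookkeeping is the step I trust least, and if it fails I would instead induct on the first Betti number of $C$.

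So let $T$ be a simple tree clasper with leaves $f,f'$ on $K_i$, which one may assume by splitting non-simple leaves. I would split $T$ at the edge incident to $f$. Surgery along $T$ then amounts to the following: the strand of $K_i$ through $f$ is band-summed (clasped) with a knot $\gamma$, which is the image of the remaining subtree $T'$. Removing $f$ from $T$ leaves a clasper whose surgery is trivial: its new leaf bounds a disk disjoint from everything, so it is a trivial leaf. Hence $L_T$ is obtained from $L$ itself by replacing a small arc of $K_i$ with an arc that runs along $\gamma$.

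The key point, and I expect the main obstacle, is to show that $\gamma$ is trivial in $\pi_1$ of the complement of $L$ once self-crossings of $K_i$ are allowed. Concretely, $\gamma$ represents an iterated commutator of meridians of the components met by the other leaves of $T$. Since $f'$ lies on $K_i$, one entry of this commutator is a meridian of $K_i$. So $\gamma$ lies in the normal closure of the meridian of $K_i$, and this meridian dies once the strand of $K_i$ is allowed to pass through $K_i$ itself. This gives a homotopy of the arc of $K_i$ back to its original position which crosses only $K_i$, i.e.\ a link-homotopy $L_T\simeq L$.

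Making this rigorous means realizing the algebraic cancellation geometrically. I would do this by induction on the degree of $T$. Using Lemma \ref{lem:calculus}, I would slide $f$ along $K_i$ until it is adjacent to $f'$, controlling each elementary move by a self-crossing change of $K_i$. Once $f$ and $f'$ are parallel leaves on the same component, I would check that the tree can be simplified to one with a trivial leaf, which finishes the induction.
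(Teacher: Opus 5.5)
The paper gives no argument for this lemma beyond the citation \cite{FY}, so your proposal has to stand on its own. Its core is sound. You band the strand of $K_i$ at $f$ to a loop $\gamma$; this band-sum description follows from the Milnor-link description recalled after Figure \ref{ckmove}, by shrinking every component except the one banded at $f$. In $\pi_1(D^2\times I\setminus L)$, $\gamma$ is an iterated commutator of conjugates of the meridians at the other leaves. The entry coming from $f'$ is a meridian of $K_i$, and it bounds a disk parallel to $f'$ that meets only $K_i$. It is therefore trivial in $\pi_1$ of the complement of $L\setminus K_i$. So the banded arc is null-homotopic rel endpoints in that complement. A generic such homotopy is an isotopy except at finitely many transverse double points of $K_i$ with itself, i.e.\ it is a link-homotopy. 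This is already a complete proof for \emph{simple tree} claspers, which is exactly the case the paper uses: self-crossing changes, and the repeated simple $C_{m+1}$-trees in $F^r_{m+1}$. No further step is needed to ``make it rigorous''.

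Other parts of your plan would fail as written.

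\emph{The final paragraph.} Lemma \ref{lem:calculus} only holds modulo $C_{k+1}$-equivalence, which is not a link-homotopy. The correction terms it produces are again repeated trees, of higher degree, so an induction on degree runs upward and has nothing to anchor it. Also, merely making $f$ and $f'$ adjacent does not yield a trivial leaf. If you want a purely geometric version of the key step, use the band-sum picture directly. Near $f'$, $\gamma$ consists of parallel copies of $\partial f'$, and all of $\gamma$ now belongs to $K_i$. Pushing the strand of $K_i$ out of $f'$ therefore only crosses $K_i$ with itself. Afterwards $f'$ is a trivial leaf, and Lemma \ref{lem:trivial} finishes.

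\emph{The reductions.} Splitting a non-simple leaf does not preserve being repeated, and the statement is actually false for non-simple claspers. Take a $C_1$-tree with one leaf meeting both $K_1$ and $K_2$ and the other leaf meeting $K_1$. It meets $K_1$ at two distinct leaves, yet surgery along it changes $\mathrm{lk}(K_1,K_2)$ by $\pm1$. So the lemma must be read for simple claspers.

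Likewise, inserting Hopf pairs of leaves on cycle edges creates leaves that clasp other leaves of the clasper rather than $L$. The resulting pieces are not tree claspers for $L$, so neither your tree argument nor Lemma \ref{lem:trivial} applies to them as stated. Rather than trying to repair this, restrict the statement to simple tree claspers, which is all the paper needs.
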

Here a clasper is called \emph{repeated} if it intersects some (string) link component at two distinct leaves (note that a self-crossing change is surgery along a repeated $C_1$-tree). 
\medskip

We next introduce the following notion. 
\begin{defi}
 The \emph{$C_k$-homotopy } ($k\ge 1$) is the equivalence relation on (string) links, denoted by $\stackrel{C_k+h}{\sim}$, generated by $C_k$-equivalence and link-homotopy. 
\end{defi}

Recall that \emph{Milnor invariants} are extracted from the quotients of the fundamental group of the (string) link complement by the successive terms of its lower central series \cite{Milnor,Milnor2}. They are indexed by a sequence of indices in $\{1,\ldots,n\}$, where $n$ is the number of components, and the \emph{length} of each Milnor invariant is simply the length of the corresponding sequence. 
Casson observed that all Milnor invariants are concordance invariants \cite{Casson}, and Milnor showed that Milnor invariants indexed by \emph{non-repeated} sequences are even link-homotopy invariants \cite{Milnor}: one often refers to the latter as \emph{Milnor link-homotopy invariants}. Habiro proved that length $\le k$ Milnor invariants are $C_k$-equivalence invariants. 
See \cite{survey} for further reading.  
\begin{lem}\label{rem:Cm+lh}
A (string) link $L$ has vanishing Milnor link-homotopy invariants of length up to $k$ ($k\ge 2$) if and only if $L$ is $C_{k}$-homotopic to an unlink.
\end{lem}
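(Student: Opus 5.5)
We prove the two implications separately, working first with string links; the statement for links then follows from the usual closure and partial-closure arguments. Milnor's invariants of closed links are only defined modulo indeterminacy, but this indeterminacy is controlled by invariants of lower length. Throughout, we use that Milnor link-homotopy invariants of length $\le k$ are invariant under both generating moves of the $C_k$-homotopy relation.

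\emph{The easy direction.} Suppose $L$ is $C_k$-homotopic to an unlink. By Habiro, Milnor invariants of length $\le k$ are invariant under $C_k$-equivalence. By Milnor, invariants indexed by non-repeated sequences are invariant under link-homotopy. Hence the non-repeated Milnor invariants of length $\le k$ of $L$ agree with those of the unlink, which all vanish.

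\emph{The converse, by induction on $k$.} We show that vanishing of all non-repeated Milnor invariants of length $\le k$ implies $L\stackrel{C_k+h}{\sim}\1$.

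\textbf{Base case $k=2$.} Vanishing linking numbers means $L$ is algebraically split, hence $C_2$-trivial by \cite{MN}.

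\textbf{Inductive step.} Assume the claim for $k$, and let $L$ have vanishing non-repeated invariants of length $\le k+1$. By induction, $L\stackrel{C_k+h}{\sim}\1$. Since $C_k$-equivalence is realized by simple $C_k$-trees, and link-homotopy by repeated claspers (Lemma \ref{lem:lhclasper}), we can write $L$ as obtained from $\1$ by surgery on simple $C_k$-trees, up to link-homotopy. We then simplify these trees up to $C_{k+1}$-equivalence and link-homotopy:
\begin{itemize}
\item repeated $C_k$-trees are discarded, since they realize link-homotopies;
\item using Lemmas \ref{lem:calculus} and \ref{lem:twist}, the remaining trees can be separated and stacked into a product $\prod_j \1_{T_j}$ of non-repeated simple $C_k$-trees;
\item using Corollary \ref{cor:asihx} together with AS and IHX, each $T_j$ is rewritten as a product of linear trees with prescribed ends, giving a normal form in which the trees for each set of $k+1$ components are expressed in a fixed basis of linear trees. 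This basis corresponds to the standard basis of the degree-$k$ non-repeated part of the tree Lie algebra, of rank $(k-1)!$ for fixed $k+1$ indices.
\end{itemize}

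\emph{The expected obstacle: identifying the coefficients.} Surgery on a linear non-repeated $C_k$-tree with ends on components $i,j$ and internal leaves on $i_1,\dots,i_{k-1}$ changes exactly the length $k+1$ invariants $\mu(i_{\sigma}\dots j)$ by $\pm1$, in a triangular pattern, while leaving all shorter ones unchanged. This is the standard computation via the Milnor link realization. Since the invariants of length $k+1$ of the product are additive at this stage, the hypothesis forces every coefficient in the normal form to vanish. Thus $L\stackrel{C_{k+1}+h}{\sim}\1$.

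Making this triangularity and additivity precise is the main difficulty, because the invariants are only well defined modulo lower ones. For string links this issue disappears: the Milnor invariants of the product are additive in the relevant degree once the lower invariants vanish. I would deduce this additivity from the Magnus expansion of the reduced free group, following Habegger--Lin's classification of string links up to link-homotopy.
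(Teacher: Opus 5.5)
Your argument is correct, but it is organized differently from the paper's. For the ``only if'' direction the paper runs no induction of its own. It quotes the clasper-theoretic link-homotopy classification \cite[Thm.~4.3]{yasuTAMS}, which writes any $n$-component string link, up to link-homotopy, as a product $\prod_{j=1}^{n-1}L_j$. Here $L_j$ is obtained by surgery on simple linear $C_j$-trees and is determined by the Milnor link-homotopy invariants of length $\le j+1$. The factors with $j\ge k$ are $C_k$-trivial, so $L$ is $C_k$-homotopic to $\prod_{j<k}L_j$, which is trivial once the invariants of length $\le k$ vanish.

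Your induction has three steps: Murakami--Nakanishi for $k=2$; reduction of the degree-$k$ layer to a normal form in non-repeated linear trees; and detection of its coefficients by length-$(k+1)$ invariants, using additivity on $C_k$-trivial factors. In effect this re-proves the relevant layer of that classification, restricted to the vanishing case.

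The paper's route is shorter. It also gives the stronger conclusion, stated at the end of its proof, that string links are \emph{classified} up to $C_k$-homotopy by these invariants. Your vanishing-only induction does not give that directly, since Milnor invariants are not additive in general. On the other hand, it needs only injectivity of the length-$(k+1)$ Milnor map on the degree-$k$ layer, and it is self-contained modulo clasper calculus and the Magnus expansion.

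One precision on the step you flag as the main difficulty. Surgery on a non-repeated linear $C_k$-tree with ends on components $i$ and $j$ changes many length-$(k+1)$ invariants, not only the ones you list. What you actually need is this: modulo higher commutators, the $j$th longitude is an iterated commutator with $x_i$ innermost. Its Magnus expansion therefore has exactly one degree-$k$ monomial beginning with $X_i$, namely the one that reads the leaves along the tree starting from the $i$-end. If you index by the $(k-1)!$ sequences that begin with $i$ and end with $j$, the matrix is diagonal with entries $\pm1$, and no triangularity argument is needed.
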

\begin{proof}
 The if part is clear, since length $\le k$ Milnor link-homotopy invariants are invariants under both $C_k$-equivalence and link-homotopy. The only if part is a consequence of the clasper-based proof of the link-homotopy classification of string links given in \cite[\S~4]{yasuTAMS} by the last author, as follows. By \cite[Thm.~4.3]{yasuTAMS}, an $n$-component string link $L$ is link-homotopic to a product $\prod_{j=1}^{n-1} L_j$, where each $L_j$ is obtained from $\1$ by surgery along a union of simple linear $C_j$-trees and is completely determined by Milnor link-homotopy invariants of $L$ of length $\le j+1$. Hence $L$ is $C_{k}$-homotopic to the product $\prod_{j=1}^{k-1} L_j$, which is completely determined by  Milnor link-homotopy invariants of $L$ of length $\le k$. This implies that string links are classified up to $C_k$-homotopy by  Milnor link-homotopy invariants of length $\le k$. 
 \end{proof}

\subsection{Claspers and concordance}\label{sec:ck+c}

We next turn to the concordance relation for (string) links.

We say that a clasper is \emph{looped} is none of its connected component is a tree clasper.
\begin{lem}(\cite{CT,GL})\label{lem:loop} 
Let $C$ be a looped clasper for a (string) link $L$.  Then $L_C$ is concordant to $L$.  
\end{lem}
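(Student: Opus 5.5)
The plan is to reduce to a single connected looped component and then build an explicit concordance in $(D^2\times I)\times I$. Surgery along a disconnected clasper is the same as successive surgeries along its components, each viewed as a clasper for the link obtained from the previous ones. Concordance is transitive. So it suffices to treat a connected clasper $C$ whose underlying graph has a cycle. By Habiro's move replacing an edge by a pair of Hopf-clasped leaves, I would choose an edge $e$ lying on a cycle of $C$ and split it into two leaves $a,b$ forming a Hopf clasp. After this splitting, $C$ becomes a clasper $C'$ with $L_{C'}=L_C$, in which $e$ has been replaced by the Hopf pair $a,b$. Because $e$ lies on a cycle, the node-and-edge path of $C$ from one end of $e$ to the other survives in $C'$. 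Following it, $a$ can be slid (band-summed along edges and nodes, using only disks and bands of $C$ itself) so that it bounds a surface $\Sigma$ disjoint from $L$. The key point of the cycle hypothesis is exactly this: in a tree clasper every leaf separates the graph, so no leaf is null-homologous in the complement of $L$ in this way. In a looped clasper some leaf is.

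Next I would pass to the $4$-dimensional picture. Surgery along $C'$ is surgery on the framed link $L(C')\subset (D^2\times I)\setminus L$. Let $W$ be obtained from $(D^2\times I)\times I$ by attaching $2$-handles along $L(C')\subset D^2\times I\times\{1\}$, together with a compensating family of $3$-handles. The $3$-handles are attached along spheres built from $\Sigma$ capped off in the $2$-handle attached along the component of $L(C')$ coming from $a$. Because $\Sigma$ avoids $L$, the product annuli $L\times I$ survive in $W$ and give an annulus cobordism from $L$ to $L_C$. The remaining step is to identify $W$ with $(D^2\times I)\times I$ rel boundary, or at least with a homology product. I would use that the components of $L(C')$ pair up in Hopf- or Borromean-type configurations, which lets the $2$-handles be cancelled against the $3$-handles in pairs.

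The main obstacle is this last identification. The homological cancellation is straightforward: the null-homologous leaf $a$ supplies the needed dual class, so $H_*(W)$ is that of a product. But upgrading to an honest smooth concordance in $D^2\times I\times I$ requires geometric handle cancellation, not merely algebraic. My first attempt would be the Conant--Teichner approach: use the IHX/STU-type moves of Lemma \ref{lem:asihx}, realized as genuine $4$-dimensional moves, to reduce $C$ to a looped clasper with a single cycle of length one (a \lq\lq tadpole''). For that model case, I would write the concordance by hand: the self-loop edge contributes a leaf that is ribbon-slice in the complement, so it can be capped by a disk in $4$ dimensions. Once that leaf is capped, it becomes a trivial leaf, and Lemma \ref{lem:trivial} applies level-wise. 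If this reduction fails, I would fall back on the Garoufalidis--Levine argument, which produces a homology concordance, and then use the fact that the construction takes place inside a regular neighborhood of $C$, a handlebody in a ball, to embed it in $B^4$.
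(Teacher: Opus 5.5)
The paper does not prove this lemma; it imports it from \cite{CT,GL}. Your reduction to a connected clasper is fine, but the proposal has a genuine gap at exactly the step you flag, and the step you call ``straightforward'' is false. In $W=(D^2\times I)\times I\cup(\text{$2$-handles along }L(C'))$ one has $H_2(W)\cong\mathbb{Z}^{|L(C')|}$, whereas $\partial_+W\cong D^2\times I$ has $H_2=0$. A $3$-handle is attached along a sphere in $\partial_+W$, and such a sphere is null-homologous in $W$. So $3$-handles cannot kill any of these classes. Geometrically, a $3$-handle cancels a $2$-handle only if its attaching sphere meets the belt circle once, and a closed curve meets a sphere in $D^2\times I$ algebraically zero times.

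Your sphere ``$\Sigma\cup$ core'' does not exist either. Since $\mathrm{lk}(a,b)=\pm1$, no surface bounded by $a$ avoids the other surgery curves. In any case a single leaf could account for at most one of the $|L(C')|$ generators.

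This also shows that you assign the wrong role to the cycle. After splitting \emph{any} edge, even a bridge of a tree clasper, each new leaf bounds a small disk missing $L$. So that property cannot separate looped claspers from tree claspers, and tree claspers do change concordance class: a single $C_1$-tree turns the unknot into a trefoil.

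The fallbacks do not close the gap. Lemma~\ref{lem:asihx} holds only modulo $C_{k+1}$-equivalence, with higher-degree \emph{tree} error terms that need not preserve concordance, so it cannot reduce the exact statement to a ``tadpole''. Lemma~\ref{lem:trivial} is a $3$-dimensional statement, so capping a leaf in $4$ dimensions does not make it ``trivial level-wise''. Finally, a homology concordance is not upgraded to a concordance by locality: showing that the local piece is a genuine product is the whole content of the lemma.

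What does work in your $4$-dimensional setting is to surger spheres in the \emph{interior} of $W$, that is, to trade $2$-handles for $1$-handles; no $3$-handles are needed. In $L(C)$, each edge contributes two $0$-framed components clasped together. Each node contributes a Borromean rings formed by the node-ends of its three edges.

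\begin{itemize}
\item Pick exactly one component per edge, always a node-side one, and never all three components at a node.
\item Two components of the Borromean rings form an unlink, so the picked components bound pairwise disjoint disks missing $L$.
\item Push these disks into the collar and cap them with the handle cores. This gives disjoint framed spheres avoiding $L\times I$. Surgering them turns the picked $2$-handles into dotted circles and keeps the annuli $L\times I$.
\item Now forget $L$; all $1$-/$2$-handle pairs cancel. Start at the leaf edges, where the leaf-side curve is a meridian of its dotted partner. Then propagate along edges: once one Borromean component at a node has been cancelled, the other two unlink.
\end{itemize}

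The cancellation diffeomorphism carries the annuli to a concordance from $L$ to $L_C$ in $(D^2\times I)\times I$.

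The cycle is used precisely in the choice of components. A connected clasper with $n$ nodes, $l$ leaves and first Betti number $\beta$ satisfies $l=n+2-2\beta$. Hence its $(3n+l)/2$ edges fit into the node capacity $2n$ if and only if $\beta\ge1$, and a Hall-type count shows the choice then exists. For tree claspers no such choice exists, consistent with the fact that they do change concordance classes.
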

\begin{lem}\label{cor:loop}
Let $L$ be a (string) link, and let $T$ be a $C_{k}$-tree for $L$ ($k\ge 2$). \\[-0.35cm]

\noindent \begin{minipage}{10.8cm}
 \begin{enumerate}
  \item Suppose that $L\cup T$ intersects a $3$-ball as illustrated on the right. 
        Then $L_T\stackrel{C_{k+1}}{\sim} L_G$ for some connected looped clasper $G$ of degree $k$. 
  \item Let $T'$ be obtained by sliding a leaf of $T$ across another leaf of $T$, as in the figure of Lemma \ref{lem:calculus}. Then $L_{T'}\stackrel{C_{k+1}}{\sim} L_{T\cup G} $ for some connected looped clasper $G$ of degree $k$. 
 \end{enumerate}
\end{minipage}
\begin{minipage}{3.5cm}
\,\,\,\,\,\, \includegraphics[scale=1.5]{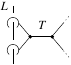}\\[0.5cm]
\end{minipage}
\end{lem}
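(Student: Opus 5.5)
For part (1), the plan is to use the local picture, which presumably shows two leaves of $T$ grabbing the same strand of $L$ next to each other, or a leaf adjacent to an edge, inside a ball containing only a trivial arc of $L$. The idea is to turn the part of $T$ inside the $3$-ball into a loop. Concretely, I would merge the two adjacent leaves on the same strand into one leaf. The underlying clasper identity is that a node whose two outgoing edges end in parallel leaves on a single arc equals, up to higher order, a clasper in which those two edges are joined into a cycle through a single leaf. I would derive this from the STU relation of Lemma \ref{lem:asihx}, read backwards, together with Lemma \ref{lem:trivial}.

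In more detail, the STU relation expresses $\1_{G_S}\cdot\1_{G_T}$ as $\1_{G_U}$ up to $C_{k+1}$-equivalence, where $G_U$ has an internal edge joining two branches and $G_S$, $G_T$ differ by the order of two leaves on a strand. In the configuration of the figure, one of the two terms, say $G_T$, has a leaf that becomes trivial after an isotopy within the ball, because the strand can be pulled off that leaf. By Lemma \ref{lem:trivial} that term is equivalent to $L$. We are then left with $L_T\stackrel{C_{k+1}}{\sim}L_{G}$, where $G=G_U$ is connected, of degree $k$, and contains a cycle, so it is looped. One point to check is that the STU relation, stated for $\1$, transfers to $L$. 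I would justify this by the usual argument: the relation is local, so view $L\cup T$ as $\1$ together with claspers, then apply Lemma \ref{lem:calculus} to move everything away.

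For part (2), the plan is to reduce to part (1). By Habiro's leaf slide formula (\cite[\S4]{Habiro}), sliding a leaf $f$ of $T$ over another leaf $f'$ of the same tree $T$ gives $L_{T'}\stackrel{C_{k+1}}{\sim}L_{T\cup T''}$. Here $T''$ is the $C_k$-tree obtained from $T$ by replacing $f$ with a parallel copy of $f'$. Its two leaves, $f'$ and the copy, are parallel and adjacent on the same strand, which is exactly the configuration of part (1). Applying part (1) to $T''$, with $T$ kept fixed and using Lemma \ref{lem:calculus} to separate it, turns $T''$ into a connected looped clasper $G$ of degree $k$ up to $C_{k+1}$-equivalence.

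I expect the main obstacle to be part (1): the precise STU-to-loop manipulation. Specifically, one must verify that the extra term is genuinely trivial (has a trivial leaf) rather than just another tree, and handle the half-twist and sign bookkeeping via Lemma \ref{lem:twist}. If the STU route fails, the fallback is to use Habiro's move that turns two parallel leaves into a single leaf with a box, and then expand the box, as in Appendix \ref{sec:A}.
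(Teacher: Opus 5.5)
\textbf{Part (1).} The key step fails. In the configuration you describe, two leaves of $T$ sit at adjacent points of one strand and hang from the same node. Exchanging their order along the strand leaves both leaves encircling the strand. So neither leaf order has a trivial leaf, and Lemma~\ref{lem:trivial} never applies.

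Note also that in Lemma~\ref{lem:asihx}, as for Jacobi diagrams, it is $G_S$ that carries the extra node; $G_T$ and $G_U$ are the two leaf orders.

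What STU actually gives at such a fork $F$ is $\1_{F'}\stackrel{C_{k+1}}{\sim}\1_F\cdot\1_G$ with $G$ looped, where $F'$ has the two leaves exchanged. Since $F'$ is the AS-partner of $F$ (the two edges at the node become crossed), this only shows that $\1_F^2$ is $C_{k+1}$-equivalent to a looped surgery.

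No argument confined to the fork can do better. Yours uses nothing outside the fork, so it would apply verbatim to a $C_2$-tree with all three leaves on a long knot. It would then make the trefoil $C_3$-equivalent to a knot concordant to the unknot. That is impossible, because the Arf invariant is both a concordance invariant and a $C_3$-invariant.

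The missing idea is to use the node adjacent to the fork node, as is available in the application at an end of a linear $C_{p+1}$-tree with $p\ge 2$. Apply IHX to the edge joining these two nodes. Up to AS and Twist, the two resulting trees are inverse to each other once the two leaves on the strand are exchanged. By STU they therefore cancel up to a looped clasper. Diagrammatically this is the Jacobi identity $((i_1,i_2),x)=(i_1,(i_2,x))-(i_2,(i_1,x))$, whose right-hand side is an STU difference. The paper does not carry this out; it quotes \cite[Lem.~2.8]{MYAGT}.

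\textbf{Part (2).} The paper applies STU directly, with $G_T=T$ and $G_U=T'$. The correction $G_S$ joins the two edges of $T$ ending at the exchanged leaves in a new node carrying one new leaf. Both edges lie in the connected tree $T$, so this closes a cycle. Hence $G_S$ is connected, looped, and of degree $k$ (two leaves removed, one node and one leaf added). You had this ingredient in hand but routed (2) through (1) instead.

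Your slide formula $L_{T'}\stackrel{C_{k+1}}{\sim}L_{T\cup T''}$ is not correct. Since $f$ and $f'$ are adjacent, $T''$ is, up to $C_{k+1}$-equivalence, a parallel copy of $T$ or of $T'$. Combined with the true relation (2), your formula would make $\1_T$ itself $C_{k+1}$-equivalent to a looped surgery for every such $T$.

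This already fails for the linear $C_3$-tree with leaves on components $1,2$ at one node and $2,3$ at the other, the two leaves on component $2$ being adjacent. That tree has a nonzero Milnor invariant of length $4$, which is invariant under both concordance and $C_4$-equivalence.

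Finally, when $f$ and $f'$ hang from different nodes, as in this example, $T''$ is not a fork. So (1) would not apply even if it were proved.
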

\begin{proof}
 Assertion (1) follows from \cite[Lem.~2.8]{MYAGT}. Assertion (2) follows from the STU relation (Lemma \ref{lem:asihx}).
 \end{proof}

We shall also need the following notion from \cite{MY5}.
\begin{defi}
 The \emph{$C_k$-concordance} ($k\ge 1$) is the equivalence relation on (string) links, denoted by $\stackrel{C_k+c}{\sim}$, generated by $C_k$-equivalence and concordance.
\end{defi}
This notion is closely related to the order $k$ \emph{Whitney concordance} studied by Conant, Schneiderman and Teichner in \cite{CST}; it is indeed shown in \cite{CST2} that a link is $C_k$-concordant to an unlink if and only if it bounds a degree $k$ Whitney tower in the $4$-ball. 

\section{The knot case: proof of Theorem \ref{thm:knots}}\label{sec:knot}

For convenience, in this proof we implicitly regard $K$ as a long knot, i.e. as a $1$-component string link. 
Recall that the closure map from long knots to (usual) knots is bijective. Denote by $\1$ the trivial long knot. 

Let $\widetilde{S}_k$ be the subgroup of the symmetric group $S_k$ formed by permutations $\sigma$ such that $\sigma(k-1)<\sigma(k)$. 
For any $\sigma\in \widetilde{S}_k$, let $B_\sigma$ and $B^{-1}_\sigma$ be the $C_k$-tree for $\1$ represented on the left-hand side of Figure \ref{fig:Tsigma}, using Convention \ref{conv} below.
 \begin{figure}[!h]
  \includegraphics[scale=1]{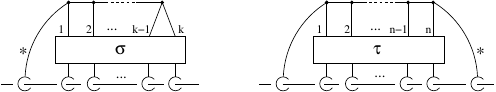}
  \caption{The $C_k$-tree $B_\sigma$ (left) and the $C_{n+1}$-tree $T_\tau$ (right). }\label{fig:Tsigma}
 \end{figure} 
\begin{convention}\label{conv}
 In all figures defining a clasper $C$, we implicitly also define the clasper $C^{-1}$ obtained by inserting a negative half-twist on the $\ast$-marked edge. Also, a rectangle labeled by a permutation represents the unique positive braid realizing this permutation.
\end{convention}

The notation $B^{-1}_\sigma$ is justified by Lemma \ref{lem:twist}, which tells us that 
\begin{equation}\label{eq:0}
\textrm{For all $\sigma\in \widetilde{S}_k$,\,\,\,\,\,\,\,} \1_{B_\sigma}\cdot \1_{B^{-1}_\sigma} \stackrel{C_{k+1}}{\sim} \1.
\end{equation} 

Fix an arbitrary total order on $\widetilde{S}_k$. We denote by $B$ the split union of $C_k$-trees $\sqcup_{\sigma\in \widetilde{S}_k} (B_\sigma\sqcup B^{-1}_\sigma)$, met successively along $\1$ in this fixed order when following the orientation. 
Observe that $B$ consists of exactly $k!$ $C_k$-trees. 

We shall prove by induction that for all $n\ge k$ we have 
$$ K\stackrel{C_{n+1}}{\sim} \1_{B\cup E_n}, $$
where $E_n$ is a union of $B$-trivial tree claspers for $\1$ (Definition \ref{def:Btrivial}). 
Since $B$ is a union of $k!$ $C_k$-trees, and since $E_n$ is $B$-trivial, the (long) knot $J_n:=\1_{B\cup E_n}$ clearly satisfies $u_k(J_n)\le k!$, and since finite type invariants of degree $\le n$ are $C_{n+1}$-equivalence invariants, this will imply the desired statement.  

The base case of the induction is clear, since by Equation (\ref{eq:0}) we have  $ K\stackrel{C_{k+1}}{\sim} \1_{B}$. 
Now assume that $ K\stackrel{C_{n+1}}{\sim} \1_{B\cup E_n}$ for some union $E_n$ of $B$-trivial tree claspers. 
Then by \cite[Prop.~3.22]{Habiro} there exists a union $F_{n+1}$ of simple $C_{n+1}$-trees for $\1$ such that 
$$ K = \1_{B\cup E_n\cup F_{n+1}}. $$
We stress that the claspers in $F_{n+1}$ may be knotted and linked with each other and with $B\cup E_n$ in an arbitrary way. 
However, by clasper calculus we can separate and simplify those $C_{n+1}$-trees up to $C_{n+2}$-equivalence; more precisely, by Lemma \ref{lem:calculus} we have  
 $$ K \stackrel{C_{n+2}}{\sim} \1_{B\cup E_n}\cdot P_{n+1}, $$
where $P_{n+1}$ is a product, each factor being obtained from $\1$ by surgery along a single $C_{n+1}$-tree. 
Furthermore, by Corollary \ref{cor:asihx} and Lemma \ref{lem:twist}, we may assume that each of these $C_{n+1}$-tree in $P_{n+1}$ is a copy of the clasper $T_\tau$ or $T^{- 1}_\tau$ shown on the right-hand of Figure \ref{fig:Tsigma} for some $\tau\in S_{n}$. 
The core idea is that, up to $C_{n+2}$-equivalence, each such factor can be eliminated using a suitable $C_k$-tree in $B$. 
More precisely, suppose that $P_{n+1}$ contains a factor $\1_{T_\tau}$ for some $\tau\in S_n$ (the case involving $T^{- 1}_{\tau}$ is strictly similar, and is omitted). Consider the restriction $\overline{\tau}\in S_k$ of $\tau$ to $\{1,\ldots,k\}$. 
If $\overline{\tau}\in \widetilde{S}_k$, then by Lemma \ref{lem:calculus} we can deform $T_\tau$ up to $C_{n+2}$-equivalence so that, in a neighborhood of $B_{\overline{\tau}}$, the union $B_{\overline{\tau}}\cup T_\tau$ is precisely as shown on the left-hand side of Figure \ref{fig:tech}.
Using Lemma \ref{lem:tech}, we obtain that $\1_{ B_{\overline{\tau}}\cup T_\tau}\stackrel{C_{n+2}}{\sim} \1_{B_{\overline{\tau}}\cup C}$, where $C$ is a $B$-trivial $C_{n-k}$-tree (see the right-hand side of Figure \ref{fig:tech}).  
If $\overline{\tau}\in S_k\setminus \widetilde{S}_k$, then observe that $\tau'=(k-1\,k)\circ \overline{\tau}\in \widetilde{S}_k$, where $(k-1\,k)$ is the  transposition exchanging $k-1$ and $k$. Applying first the AS Lemma \ref{lem:asihx}, then the Twist Lemma \ref{lem:twist}, we can 
use Lemma \ref{lem:calculus}  to deform $T_\tau$ up to $C_{n+2}$-equivalence so that $B_{\tau'}\cup T_\tau$ is as shown on the left-hand side of Figure \ref{fig:techbis}. Remark \ref{lem:tech} then shows that $\1_{ B_{\tau'}\cup T_\tau}\stackrel{C_{n+2}}{\sim} \1_{B_{\tau'}\cup C'}$, where $C$ is a $B$-trivial $C_{n-k}$-tree.  

Applying this procedure successively to all factors in $P_{n+1}$, yields the desired equivalence. 

\begin{rem}\label{rem:k=1}
The proof of Theorem  \ref{thm:knots} also applies to the case $k=1$, that is, to recover Theorem \ref{thm:OTY}.
The argument is strictly the same, once an appropriate substitute for the clasper $B$ is chosen. 
For the $k=1$ case, we simply chose $B$ to be a single $C_1$-tree for $\1$ shown below. 
\[ \textrm{\includegraphics[scale=1.3]{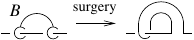}} \]
As the figure illustrates, the result of surgery on $\1$ along $B$ is equivalent to $\1$ itself. We can then prove, by exactly the same induction as above and using Lemma \ref{lem:tech}, that for all $n\ge k$ we have 
$ K\stackrel{C_{n+1}}{\sim} \1_{B\cup E_n}$ for some union $E_n$ of $B$-trivial tree claspers. 
The fact that $B$ consists here of a single clasper provides the desired unknotting number $1$ property.
\end{rem}

\section{The link case: proof of Theorems \ref{thm:linklh} and \ref{thm:linkconc}.}  \label{sec:link}

\subsection{$C_k$-unknotting number and link-homotopy: proof of Theorem \ref{thm:linklh}}\label{sec:prooflh}

We can freely prove the result for string links rather than links. Indeed, a link has vanishing Milnor invariants up to length $\le k$ if and only if it is the closure of a string link with same vanishing property, and a similar equivalence holds for link-homotopy. 
We denote by $\1$ the $n$-component trivial string link. 

The strategy is similar to the one used for the knot case in the previous section.

We will need the following general definition. 
Let $s=i_0 i_1 \ldots i_{k}$ be a subsequence of $12...n-1$, for some $k< n$; 
note in particular that $s$ is an increasing sequence, that is $i_0<i_1<\ldots<i_{k}$.  
Recall that $\widetilde{S}_k\subset S_k$ is the subgroup of permutations $\sigma$ such that $\sigma(k-1)<\sigma(k)$. 
For any $\sigma\in \widetilde{S}_{k}$, let $H_\sigma(s)$ and $H^{-1}_\sigma(s)$ be the $C_k$-trees for $\1$ represented on the left-hand side of Figure \ref{fig:Hsigma} (see Convention \ref{conv}).
We stress that $H^{\pm}_\sigma(s)$ intersects the $i$th component of $\1$ if and only if $i=i_j$ for some $j$.
 \begin{figure}[!h]
  \includegraphics[scale=1]{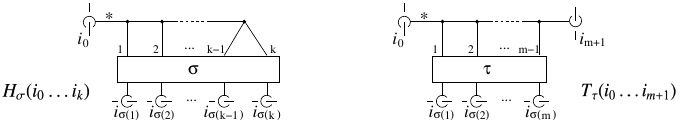}
  \caption{The $C_k$-tree $H_\sigma(i_0 \ldots i_{k})$ (left) and the $C_{m+1}$-tree $T_\tau(i_0 \ldots i_{m+1})$ (right).}\label{fig:Hsigma}
 \end{figure} 
We also stress that $H^{\pm}_\sigma(s)$ is linear, and that its left end intersects the component of $\1$ with smallest index $i_0$. 

Similar to (\ref{eq:0}), we have the following consequence of Lemma \ref{lem:twist}: 
\begin{equation}\label{eq:0L}
\textrm{For all $s=i_0 i_1 \ldots i_{k}$ and all $\sigma\in \widetilde{S}_{k}$,\,\,\,\,\,\,\,} \1_{H_\sigma(s)}\cdot \1_{H^{-1}_\sigma(s)} \stackrel{C_{k+1}}{\sim} \1.
\end{equation} 

Fixing some total order on $\widetilde{S}_{k}$, denote by $H$ the split union of $C_k$-trees in the product $\prod_{\vert s\vert =k+1} \prod_{\sigma\in \widetilde{S}_{k}} \1_{H_\sigma(s)}\cdot\1_{H^{-1}_\sigma(s)}$, where the first product is over all length $k+1$ subsequences $s$ of $12...(n-1)$, ordered lexicographically. 
We stress that $H$ is disjoint from the $n$th component of $\1$. 

Now let $L$ be a string link with vanishing Milnor link-homotopy invariants up to length $\le k$.  
Let us prove by induction that for all $m\ge k$ we have 
$$ L\stackrel{C_{m+1}}{\sim} \1_{H\cup E_m\cup R_m}, $$
where $E_m$ is a union of $H$-trivial tree claspers for $\1$, and where $R_m$ is a union of repeated claspers. 
The result will follow, since $H$ consists of exactly $\binom{n-1}{k+1}k!$ $C_k$-trees and since both $C_{m+1}$-equivalence and surgery along repeated claspers imply link-homotopy for $m\ge n$ (Lemma  \ref{lem:lhclasper}). 

The case $m=k$ is given by (\ref{eq:0L}), as follows. By Lemma \ref{rem:Cm+lh}, we have that $L$ is $C_{k+1}$-homotopic to $\1$, which by Lemma \ref{lem:lhclasper} and \cite[Prop.~3.22]{Habiro} implies that $L\stackrel{C_{k+1}}{\sim} \1_{R_1}$ for some union $R_1$ of repeated claspers. Equation (\ref{eq:0L}) then gives that $ L\stackrel{C_{k+1}}{\sim} \1_{H\cup R_1}$.

Now suppose that $L\stackrel{C_{m+1}}{\sim} \1_{H\cup E_m\cup R_m}$ for some union $E_m$ of $H$-trivial claspers and some union of repeated claspers $R_m$. 
Then there exists a union $F_{m+1}$ of simple $C_{m+1}$-trees for $\1$ such that 
 $$
 L = \1_{H\cup E_m\cup R_m\cup F_{m+1}}.
 $$
Denote by $F^r_{m+1}$ the subset of $F_{m+1}$ consisting of all repeated claspers, and set $R_{m+1}=R_m\cup  F^r_{m+1}$. 
Then by Lemma \ref{lem:calculus}, Corollary \ref{cor:asihx} and the Twist Lemma \ref{lem:twist}, we have  
 $$ L \stackrel{C_{m+2}}{\sim} \1_{H\cup E_m\cup R_{m+1}}\cdot P_{m+1}, $$
where $P_{m+1}$ is a product, each factor being obtained from $\1$ by surgery along a copy of the $C_{m+1}$-tree $T_\tau(s)$ or $T^{- 1}_\tau(s)$ shown in Figure \ref{fig:Hsigma} for some length $m+2$ subsequence $s=i_0i_1\ldots i_{m+1}$ of $12\ldots n$ and some $\tau\in S_{m}$. 
Note that $T^{\pm}_\tau(s)$ is linear, and that its left end (resp. right end) intersects the component of $\1$ with smallest index $i_{0}$ (resp. with largest index $i_{m+1}$). 

Without loss of generality, we can focus on a factor $\1_{T_\tau}(s)$ for some subsequence $s=i_0i_1\ldots i_{p+1}$ of $12\ldots n$ and some $\tau\in S_m$. 
Consider the length $k+1$ subsequence $\ov{s}=i_0 i_1\ldots i_k$ of $s$, 
and denote by $\overline{\tau}\in S_k$ the restriction of $\tau$ to $\{1,\ldots,k\}$. 
If $\overline{\tau}\in \widetilde{S}_k$, then $H_{\overline{\tau}}(\ov{s})$ is one of the $\binom{n-1}{k+1}k!$ $C_k$-trees in $H$: using Lemma \ref{lem:calculus} we can deform $H_\tau(s)$ up to $C_{m+2}$-equivalence so that, in a neighborhood of $H_{\overline{\tau}}(\ov{s})$, the union $H_{\overline{\tau}}(\ov{s})\cup H_\tau(s)$ is exactly as shown on the left-hand side of Figure \ref{fig:tech}. 
We can then apply Lemma \ref{lem:tech}, and we obtain in this way that $\1_{ H_{\overline{\tau}}(\ov{s})\cup H_\tau(s)}\stackrel{C_{m+2}}{\sim} \1_{H_{\overline{\tau}}(\ov{s})\cup C}$, where $C$ is an $H$-trivial $C_{m-k}$-tree.  
If $\overline{\tau}\in S_k\setminus \widetilde{S}_k$, then we can proceed exactly as in Section \ref{sec:knot}, using Remark \ref{rem:tech} instead of Lemma \ref{lem:tech}. 

Applying this procedure to all factors in $P_{m+1}$ successively, yields the desired equivalence. 
This concludes the induction, and the proof of Theorem \ref{thm:linklh}.

\begin{rem}\label{rem:k=1link}
The proof of Theorem \ref{thm:linklh} can be easily adapted to the case $k=1$, as follows.\\[-0.3cm] 

\noindent \begin{minipage}{9.5cm}
For each pair $(i,j)$ of integers in $\{1,\cdots,n-1\}$ such that $i<j$, let $C_{ij}$ denote the $C_1$-tree for $\1$ shown to the right. Set 
$$ H:=\prod_{i<j} \1_{C_{ij}}\cdot  \1_{C_{ij}^{-1}}, $$
where the product is according to the lexicographic order. 
\end{minipage}
\begin{minipage}{3.5cm}
\,\,\,\,\,\, \includegraphics[scale=1]{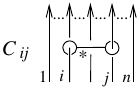}\\[-0.45cm]
\end{minipage}
Note that the number of $C_1$-trees in $H$ is $2\binom{n-1}{2}=(n-1)(n-2)$. 
From then on, the same inductive argument as above applies. 
(Note that the discussion is in fact simpler, since for $k=1$ we no longer have to distinguish the cases where $\overline{\tau}\in \widetilde{S}_k$ and $\overline{\tau}\in S_k\setminus \widetilde{S}_k$, and we can always make use of Lemma \ref{lem:tech}.)
\end{rem}

\subsection{$\Delta$-unknotting number and concordance: proof of Theorem \ref{thm:linkconc}}\label{sec:proofconc}

The proof shall follow the same global strategy as for Theorems \ref{thm:knots} and \ref{thm:linklh}. 

Let $D$ be union of $2\binom{n}{2}+2\binom{n}{3}=\frac{1}{3}(n^3-n)$ simple $C_2$-trees for $\1$ given by
$$ \1_D = \prod_{1\le i<j\le n} (\1_{W_{ij}}\cdot \1_{W'_{ij}})  \prod_{1\le i<j<k \le n} (\1_{B_{ijk}}\cdot \1_{B^{-1}_{ijk}}), $$
where $W_{ij}$, $W'_{ij}$ and $B^{\pm 1}_{ijk}$ are represented in Figure \ref{fig:C2}. 
   \begin{figure}[!h]
  \includegraphics[scale=1]{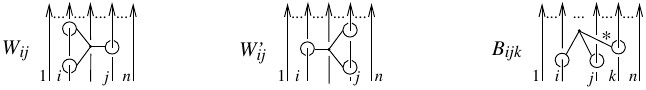}
  \caption{The $C_2$-trees $W_{ij}$, $W'_{ij}$ and $B_{ijk}$. }\label{fig:C2}
 \end{figure} 

Note that, for all $i,j$ with $i<j$, we have $\1_{W_{ij}}\simeq \1_{W'_{ij}}$: this is merely a consequence of the symmetry of the Whitehead (string) link, see \cite[Fig.~6]{jbjktr}. Combining this fact with the AS and STU relations (Lemma \ref{lem:asihx}), we have  
 \begin{equation}\label{eq:conc0}
    \textrm{For all $i,j$ with $i<j$,} \quad \1_{W_{ij}}\cdot \1_{W'_{ij}}\stackrel{C_3}{\sim} 1_{G_{ij}},
 \end{equation}
 where $G_{ij}$ is some looped clasper. 

Let $L$ be a $C_3$-trivial string link. 
We will prove by induction that for all $p\ge 2$ we have 
$$ L\stackrel{C_{p+1}}{\sim} \1_{D\cup E_p\cup G_p}, $$
where $E_p$ is a union of $D$-trivial tree claspers for $\1$, and where $G_p$ is a union of looped claspers. 
This will imply the desired result with $J=\1_{D\cup E_p}$. Indeed, on one hand since $D$ consists of $\frac{1}{3}(n^3-n)$ $C_2$-trees and $E_p$ is $D$-trivial, we have the desired upper bound on $u_2(J)$. On the other hand, by Lemma \ref{lem:loop} $L$ is $C_{p+1}$-concordant to $J$, hence cannot be distinguished from $J$ by any finite type concordance invariant of degree $\le p$. 

Consider first the case $p=2$. 
By assumption we have  $L\stackrel{C_{3}}{\sim} \1$.  For all $i,i$ ($i<j$), let $\overline{G_{ij}}$ be a looped clasper obtained by adding a twist on some edge of the clasper $G_{ij}$ given by Equation (\ref{eq:conc0}).
Then by Lemmas \ref{lem:calculus} and \ref{lem:twist} we have 
$$  \1_{D}\cdot \prod_{1\le i<j\le n} \1_{\overline{G_{ij}}}\stackrel{C_{3}}{\sim} \1\stackrel{C_{3}}{\sim} L,  $$
as desired.

Now suppose that $L\stackrel{C_{p+1}}{\sim} \1_{D\cup E_p\cup G_p}$ for some union $E_p$ of $D$-trivial claspers and some union of looped claspers $G_p$. 
By Lemma \ref{lem:calculus} and Corollary \ref{cor:asihx}, we have 
$$ 
L \stackrel{C_{p+2}}{\sim} \1_{D\cup E_p\cup G_p}\cdot T_{p+1},
$$
where $T_{p+1}$ is a product, each factor being obtained from $\1$ by surgery along a linear $C_{p+1}$-tree.
Pick a factor $\1_T$ in $T_{p+1}$, and consider one of its two ends, intersecting (say) the $i$th component of $\1$.  
Then by the Twist Lemma \ref{lem:twist}, one may assume up to $C_{p+2}$-equivalence that $\1\cup T$ intersects a $3$-ball in exactly one of the following configurations, where $i,j,k$ are pairwise distinct: 
$$ \textrm{\includegraphics[scale=1]{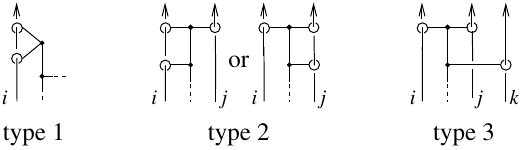}} $$
\begin{itemize}
\item If $T$ is of the first type, then Assertion (1) of Lemma \ref{cor:loop} implies that $\1_P\stackrel{C_{p+1}}{\sim} \1_{G_P}$ for some looped clasper $G_P$.
\item If $T$ is of the second type, then we can use Lemma \ref{lem:calculus} to deform $T$ so that $W_{ij}\cup T$ (resp. $W'_{ij}\cup T$) is exactly as on the left-hand side of Figure \ref{fig:tech} in a neighborhood of $W_{ij}$ (resp. $W_{ij}$). Lemma \ref{lem:tech} then implies that 
 $\1_{D\cup E_p\cup G_p}\cdot \1_T\stackrel{C_{p+2}}{\sim} \1_{D\cup E'_p\cup G_p}$, 
 where $E'_p$ is obtained from $E_p$ by adding one $D$-trivial $C_{p-1}$-tree. 
\item If $T$ is of the third type, the same argument applies, using the $C_2$-tree $B_{ijk}$ in $D$ instead of $W_{ij}$. 
\end{itemize}
Summarizing, for an arbitrary factor $\1_T$ of $T_{p+1}$ we have that 
 $\1_{D\cup E_p\cup G_p}\cdot \1_T\stackrel{C_{p+2}}{\sim} \1_{D\cup E'_p\cup G'_p}$, 
for some union $E'_p$ of $D$-trivial tree claspers and some union $G'_p$ of looped claspers. 
The conclusion then follows by applying this fact iteratively to all factors of $T_{p+1}$.
This completes the proof of Theorem \ref{thm:linkconc}

\begin{proof}[Proof of Corollary \ref{cor:linkconc}]
Any $C_3$-trivial link $\widetilde{L}$ is the closure of a $C_3$-trivial string link $L$: this is simply because $\widetilde{L}$ is obtained from the unlink by surgery along a union of (simple) $C_3$-trees (\cite[Prop.~3.22]{Habiro}), which can always be assumed to be disjoint from a disk meeting each link component once transversely and with same local sign. 
Now let $p$ be an integer. 
Following the proof of Theorem \ref{thm:linkconc} above, we have that 
$L$ is $C_{p+1}$-equivalent to a product 
$\1_{D\cup E_p\cup G_p}$, where $E_p$ is a union of $D$-trivial claspers and $G_p$ is a union of looped claspers. 
Hence there exists a union $F_{p+1}$ of $C_{p+1}$-trees for $\1$ such that   $L=\1_{D\cup E_p\cup G_p\cup F_{p+1}}$. 
Deleting $D$, which is a union of  $\frac{1}{3}(n^3-n)$ simple $C_2$-trees, gives the string link $\1_{G_p\cup F_{p+1}}$, which is $C_{p+1}$-concordant to $\1$ by Lemma \ref{lem:loop}. 
In other words, $L$ can be deformed into a string link with vanishing Milnor invariants of length $\le p$ by at most $\frac{1}{3}(n^3-n)$ $\Delta$-moves. 
The conclusion follows, since the closure of a string link with vanishing Milnor invariants of length $\le k$, has the same vanishing property. 
\end{proof}

\subsection{$C_k$-unknotting number and concordance}\label{sec:moreconc}
In this subsection we discuss several extensions of Theorem \ref{thm:linkconc}.
\medskip 

We first outline how the strategy used above can be applied to all higher order unknotting numbers. 
More precisely, suppose that a string link $L$ is $C_{k+1}$-trivial and that we seek an upper bound for the $C_k$-unknotting number of a link $J$ sharing all finite type concordance invariants with $L$ up to some given degree. 
Following the above proof, this bound is given by the cardinality of the clasper $C$, which provides a \lq reservoir' of tree claspers allowing us to apply Lemma \ref{lem:tech} to all possible higher degree tree claspers appearing during the induction. 
By Assertion (2) of Lemma \ref{cor:loop} and Corollary \ref{cor:asihx}, it suffices to consider all linear trees with $k+1$ univalent vertices labeled by $\{1,\ldots,n\}$: this argument shows that $u_k(J)\le n^{k+1}$. 

This is of course a very rough upper bound, and the proof of Theorem \ref{thm:linkconc} above illustrates how it can be refined by a closer analysis of the higher degree claspers involved in the induction process. We shall however not pursue this direction in this paper. 
Instead, we conclude this section with a refinement of the $\Delta$-unknotting number bound given in Theorem \ref{thm:linkconc}. 
\medskip

Since Theorem \ref{thm:linkconc} gives an upper bound for the $C_2$-unknotting number, it is natural to extend it to the case 
where $L$ is $C_2$-trivial, that is, $L$ is an algebraically split string link. 
In that case, following \cite[(5.1)]{MY5} we have  
\begin{equation}\label{eq:laté}
L\stackrel{C_3}{\sim}\prod_{1\leq i\leq n} T_i^{\, \textrm{Arf}(L_i)}\cdot \prod_{1\leq i< j\leq n} (\1_{W_{ij}})^{\, \beta^{(2)}(L_i\cup L_j)}\cdot \prod_{1\leq i<j<k\leq n} (\1_{B_{ijk}})^{\, \mu_L(ijk)}\cdot \1_{G},
\end{equation}
where $T_i$ is a copy of the (long) right-handed trefoil on the $i$th component, $W_{ij}$ and $B^{\pm 1}_{ijk}$ are given in Figure \ref{fig:C2} and $G$ is some union of looped claspers, and 
where Arf$(L_i)\in \{0,1\}$ is the Arf invariant of $L_i$, $\beta^{(2)}(L_i\cup L_j)\in \{0,1\}$ is the mod $2$ Saton-Levine invariant of $L_i\cup L_j$ and $\mu_L(ijk)\in \mathbb{Z}$ is Milnor's triple linking number of $L_i\cup L_j\cup L_k$.

The proof then proceed exactly as above, with the following modifications:
\begin{itemize}
\item Since the trefoil has $C_2$-unknotting number $1$, each nontrivial factor $T_i^{\, \textrm{Arf}(L_i)}$ in (\ref{eq:laté}) increases the $C_2$-unknotting number of $J$ by $1$ compared to the statement of Theorem \ref{thm:linkconc};
\item Each nontrivial factor $(\1_{W_{ij}})^{\, \beta^{(2)}(L_i\cup L_j)}$ in (\ref{eq:laté}), increases $u_2(J)$ by $1$, since $L$ contains a copy of the clasper $W_{ij}$; 
\item Each nontrivial factor $(\1_{B_{ijk}})^{\, \mu_L(ijk)}$ in (\ref{eq:laté}) increases $u_2(J)$ by $\vert \mu_L(ijk)\vert$, the number of copies of the clasper $B^{\pm 1}_{ijk}$ in (\ref{eq:laté}). But since any of these copies can be used to apply Lemma \ref{lem:tech} when necessary, we can delete the factor $(\1_{B_{ijk}}\cdot \1_{B^{-1}_{ijk}})$ in $C$, thus decreasing the $C_2$-unknotting number of $J$ by $2$. Summarizing, the $C_2$-unknotting number of $J$ changes compared to Theorem \ref{thm:linkconc} by 
$$ 
M(ijk):= \left\{ \begin{array}{cl} 
             0 & \textrm{if $\mu_L(ijk)=0$} \\
             \vert \mu_L(ijk)\vert-2 & \textrm{if $\mu_L(ijk)\neq 0$.}  \end{array}\right.            
$$
\end{itemize}
This shows the following.
\begin{thm}
Let $L$ be an algebraically split link and let $p$ be an integer. 
There exists a link $J$ with $\Delta$-unknotting number 
$$ u_2(J)\le \frac{1}{3}(n^3-n)+\sum_{1\le i\le n} \textrm{Arf}(L_i) + \sum_{1\le i<j\le n} \beta^{(2)}(L_i\cup L_j) 
+ \sum_{1\le i<j<k\le n} M(ijk) $$  
such that $v_p(K)=v_p(J)$ for all finite type concordance invariants $v_p$ of degree $\le p$. 
\end{thm}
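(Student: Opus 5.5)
We follow the proof of Theorem \ref{thm:linkconc} verbatim, replacing only the base case, and we work with string links throughout. As in the proof of Corollary \ref{cor:linkconc}, an algebraically split link is the closure of an algebraically split string link (by \cite{MN} it is $C_2$-trivial, and the $C_2$-trees can be pushed off a disk meeting each component once). Moreover, concordance invariants of the closure are determined by those of the string link. So it suffices to treat a string link $L$ and take $J$ to be a closure.

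\textbf{Base case.} The starting point is the decomposition (\ref{eq:laté}) from \cite{MY5}. We build the reservoir $D'$ of $C_2$-trees as follows:
\begin{itemize}
\item a $C_2$-tree realizing $T_i$ (the trefoil has $u_2=1$) whenever $\textrm{Arf}(L_i)=1$;
\item the pair $W_{ij}\cup W'_{ij}$ for all $i<j$, together with one extra copy of $W_{ij}$ whenever $\beta^{(2)}(L_i\cup L_j)=1$;
\item the pair $B_{ijk}\cup B^{-1}_{ijk}$ for those triples with $\mu_L(ijk)=0$, and the $\vert\mu_L(ijk)\vert$ copies of $B^{\pm1}_{ijk}$ appearing in (\ref{eq:laté}) when $\mu_L(ijk)\neq 0$.
\end{itemize}
The number of $C_2$-trees in $D'$ is exactly the claimed bound. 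Using (\ref{eq:conc0}), Equation (\ref{eq:0L})-type cancellation $\1_{B_{ijk}}\cdot\1_{B^{-1}_{ijk}}\stackrel{C_3}{\sim}\1$ (Lemma \ref{lem:twist}), and Lemma \ref{lem:calculus} to reorder factors, we get
$$L\stackrel{C_3}{\sim}\1_{D'\cup G_2}$$
for some union $G_2$ of looped claspers, absorbing the $\overline{G_{ij}}$ correction exactly as in the case $p=2$ above.

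\textbf{Induction.} We prove that $L\stackrel{C_{p+1}}{\sim}\1_{D'\cup E_p\cup G_p}$, with $E_p$ a union of $D'$-trivial trees and $G_p$ a union of looped claspers. Given a linear $C_{p+1}$-tree factor $T$, we treat it according to its type, as in the three-case analysis of the proof of Theorem \ref{thm:linkconc}:
\begin{itemize}
\item if $T$ is of the first type, Lemma \ref{cor:loop}(1) converts it into a looped clasper;
\item if $T$ is of the second type, we use a copy of $W_{ij}$ in $D'$, which always exists;
\item if $T$ is of the third type, we use a copy of $B_{ijk}$ or $B^{-1}_{ijk}$ in $D'$, which exists for every triple. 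When $\mu_L(ijk)\neq0$, only one sign may be present, so we first adjust $T$ by Lemma \ref{lem:twist} and the AS relation (Lemma \ref{lem:asihx}) so that it matches the available sign.
\end{itemize}
In the second and third cases, Lemma \ref{lem:tech} then replaces $T$ by a $D'$-trivial $C_{p-1}$-tree, up to $C_{p+2}$-equivalence.

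\textbf{Conclusion.} Setting $J=\1_{D'\cup E_p}$, the $D'$-triviality of $E_p$ gives the bound on $u_2(J)$. By Lemma \ref{lem:loop}, $J$ is $C_{p+1}$-concordant to $L$, so they share all finite type concordance invariants of degree $\le p$.

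\textbf{Main obstacle.} The main difficulty is the sign-adjustment step for triples with $\mu_L(ijk)\neq0$. The configuration of Lemma \ref{lem:tech} must be achievable with a tree of one fixed sign. We expect a half-twist moved onto the edge of $T$ adjacent to the leaf on the $i$th component (Lemma \ref{lem:twist}) to suffice, since $T$ is then flipped at the cost of a sign, which is harmless here.
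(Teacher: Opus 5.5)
Your proposal is correct and is essentially the paper's own argument: the same modified reservoir, hence the same count, followed verbatim by the three-case induction of Theorem \ref{thm:linkconc}. The reservoir consists of one trefoil $C_2$-tree for each $i$ with $\textrm{Arf}(L_i)=1$, an extra $W_{ij}$ whenever $\beta^{(2)}(L_i\cup L_j)=1$, and, for each triple with $\mu_L(ijk)\neq 0$, the $\vert\mu_L(ijk)\vert$ copies of $B^{\pm1}_{ijk}$ in place of the pair $B_{ijk}\cup B^{-1}_{ijk}$. The sign issue you single out is exactly what the paper dismisses with ``any of these copies can be used''. Note only that it is not resolved by putting a single half-twist on $T$, since that replaces $\1_T$ by its inverse and is not harmless. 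Instead, insert a cancelling pair of half-twists, keep one parallel to the $\ast$-edge of the reservoir tree, and slide the other away by Lemma \ref{lem:twist}, just as for the factors $\1_{T^{-1}_\tau}$ in Section \ref{sec:knot}.
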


\section{The case of Ribbon $2$-knots: proof of Theorem \ref{thm:2knots}}\label{sec:ribbon}

The goal of this section is to prove Theorem  \ref{thm:2knots} for ribbon $2$-knots. 
Before doing so in Section \ref{sec:proofribbon}, we recall some necessary material on the normalized Alexander polynomial, welded theory and arrow calculus. 

\subsection{The normalized Alexander polynomial}\label{sec:alex}
The Alexander polynomial $\Delta_R(t)\in \mathbb{Z}[t^{\pm 1}]$ of a ribbon $2$-knot $R$ is well-defined up to multiplication by $\pm t^{\pm n}$. 
By imposing that $\Delta_R(1)=1$ and $\frac{d\Delta_R}{dt}(1)=0$, we obtain the well-defined \emph{normalized Alexander polynomial} of $R$, denoted by $\tilde\Delta_R$ \cite{HKS}. 
Taking the power series expansion at $t=1$  
$$
\tilde\Delta_L(t) = 1 + \sum_{k\ge 2} \alpha_k(L) (1-t)^k 
$$
defines an infinite family of integer-valued invariants $\alpha_k$, called 
the $k$th \emph{normalized coefficient} of the Alexander polynomial (this differs from \cite[\S~2]{HKS} by a factor $(-1)^k$).

For each $k\ge 2$, $\alpha_k$ is a finite type invariant of degree $k$ in the sense of \cite{HKS}, and any finite type invariant of ribbon $2$-knots is a polynomial in these $\alpha_k$ by \cite{HS} (see also \cite[\S~10.2]{arrow}).

\subsection{Welded objects and the Tube map}\label{sec:Tube}

Ribbon $2$-knots are closely related to diagrammatic objects called welded (long) knots.

A \emph{welded long knot} is the equivalence class of a properly immersed oriented interval in the $2$-disk, whose transverse double points are decorated either as classical crossings (as in usual knot diagrams) or as virtual ones (represented by a circled double point); the welded equivalence  is generated by the usual Reidemeister moves, the virtual Reidemeister moves (Reidemeister moves where all crossings are replaced by virtual ones), 
 the Mixed and the OC moves shown below:\\[-0.5cm]
\[
    \begin{array}{cc}
      \dessin{1.5cm}{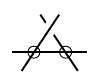}\stackrel{\textrm{Mixed}}{\longleftrightarrow} \dessin{1.5cm}{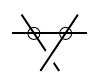} \, & \, 
      \dessin{1.5cm}{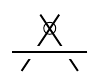}\ \stackrel{\textrm{OC}}{\longleftrightarrow}
      \dessin{1.5cm}{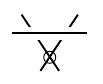}
    \end{array}.
\]

A basic local move on welded long knots is the \emph{virtualization move}, which replaces a classical crossing by a virtual one. It is easily seen that this is an unknotting operation: we can thus define the \emph{virtualization number} of a welded long knot, as the minimal number of virtualization moves needed to trivialize it. 
\medskip

There is a surjective \emph{Tube map} from welded long knots  to ribbon $2$-links, due to  Satoh in \cite{Satoh}, which can roughly be described as follows. 
Given a welded long knot, the Tube map assigns to each classical crossing a pair of annuli locally linked in a $4$-ball as shown in \cite[Fig.~6]{Satoh}. Next we connect these annuli to one another by unlinked annuli, as combinatorially prescribed by the diagram, so as to obtain a single knotted annulus. It only remains to glue a $2$-disk at each boundary component of this annulus (corresponding to the two boundary points of the welded long knot): the result is the desired (diagram of a) $2$-knot. 

As noted in \cite{ABMW}, two welded long knots which differ by a virtualization move, have images by the Tube map that differ by a ribbon crossing change. Hence the virtualization number of a welded long knot $D$ gives an upper bound for the ribbon unknotting number of Tube$(D)$.
\medskip

Welded (long) knots admit a notion of \lq fundamental group', simply defined by the usual Wirtinger algorithm with no extra relation at virtual crossings. Any classical invariants extracted from the knot group thus has a natural welded generalization. There is in particular a normalized Alexander polynomial for welded long knots, from which one extracts normalized coefficients $\alpha_k$ exactly as in Section \ref{sec:alex}, see \cite[\S~6.2]{arrow}. 
Moreover, the Tube map is well-behaved with respect to these constructions. In particular we have: 
\begin{fact}
Let $D$ be a welded long knot. Then for all $k\ge 2$ we have $\alpha_k\left( \textrm{Tube}(D)\right) = \alpha_k(D)$. 
\end{fact}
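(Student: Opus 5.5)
The Fact asserts that $\alpha_k(\mathrm{Tube}(D)) = \alpha_k(D)$. The plan is to show that the Tube map preserves the knot group, the meridian, and the ingredients of the normalization. Since the normalized Alexander polynomial and its coefficients $\alpha_k$ are determined by the knot group together with the choice of meridian, equality of these data gives equality of all $\alpha_k$.

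\textbf{Step 1: groups.} I will invoke Satoh's result \cite{Satoh} that there is an isomorphism
\[
\pi_1\bigl(S^4\setminus \mathrm{Tube}(D)\bigr)\cong G(D),
\]
where $G(D)$ is the welded Wirtinger group. Concretely, in the broken surface diagram of $\mathrm{Tube}(D)$, each arc of $D$ gives a tube, hence a meridian generator. Each classical crossing gives a pair of linked annuli, and the Wirtinger relation there is exactly the one read off at that crossing of $D$. Virtual crossings give unlinked annuli and contribute no relation, matching the welded convention. Under this isomorphism, meridians correspond to meridians, so abelianization to $\langle t\rangle\cong\mathbb{Z}$ is respected.

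\textbf{Step 2: Alexander polynomials.} Both Alexander polynomials are defined from the Alexander module, i.e.\ the Fox Jacobian of a presentation of the group. For ribbon $2$-knots, $\Delta$ is the order of $H_1$ of the infinite cyclic cover, which is computed from the Wirtinger presentation. Since the two presentations coincide, we get $\Delta_{\mathrm{Tube}(D)}(t)\doteq\Delta_D(t)$ up to units $\pm t^{n}$. The main point to check is that the welded Alexander polynomial in \cite[\S~6.2]{arrow} is defined by this same Fox calculus recipe from $G(D)$; for a long knot this amounts to deleting one column of the Jacobian, exactly as for ribbon knots.

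\textbf{Step 3: normalization.} Both normalized polynomials are singled out by the conditions $\tilde\Delta(1)=1$ and $\tilde\Delta'(1)=0$. These conditions fix the unit $\pm t^n$ uniquely: the value at $1$ fixes the sign, and the derivative condition fixes $n$. I expect this step to need the most care, since normalizing with $\tilde\Delta'(1)=0$ requires $\Delta$ to have a suitable form, and that must be justified on the welded side as well as the ribbon side. For $\Delta(1)=\pm1$, the derivative condition amounts to $n\Delta(1)+\Delta'(1)=0$, i.e.\ $n=-\Delta'(1)/\Delta(1)$, which pins down $n$ once it is known to be an integer. Since the two polynomials agree up to a unit, I will simply apply the same normalization to both, so $\tilde\Delta_{\mathrm{Tube}(D)}=\tilde\Delta_D$. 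Expanding both at $t=1$ then yields $\alpha_k(\mathrm{Tube}(D))=\alpha_k(D)$ for all $k\ge2$.
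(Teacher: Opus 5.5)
Your proposal is correct and follows the same route as the paper. The paper gives no detailed proof: it relies on Satoh's result that the Tube map preserves the Wirtinger group together with its meridians, so the normalized Alexander polynomial, and hence every $\alpha_k$, is extracted in the same way on both sides. The worry you raise in Step~3 is harmless, since $\Delta(1)=\pm 1$ forces the shift $n=\mp\Delta'(1)$ to be an integer.
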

 
\subsection{Arrow calculus}

There is a welded analogue of clasper calculus for welded objects, called \emph{arrow calculus} \cite{arrow}. 
We give a brief overview of this theory, mentioning only the ingredients that will be useful for our purpose. The reader is referred to \cite{arrow} for  further details.
\medskip

Let $D$ be a welded diagram. 
A \emph{w-arrow} for $D$ is an oriented interval, possibly decorated by a $\bullet$, immersed in the plane so that the endpoints lie on $D$. \emph{Surgery} on $D$ along a w-arrow introduces a classical and a virtual crossing as follows:
\[
\dessin{1.5cm}{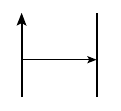}\leadsto\dessin{1.5cm}{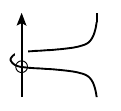}
\hspace{1cm}
\dessin{1.5cm}{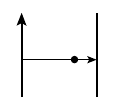}\leadsto\dessin{1.5cm}{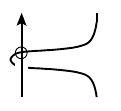}.
\]

More generally, for any $k\ge 1$ a \emph{w-tree of degree $k$} for $D$ is an immersed oriented unitrivalent tree with $k-1$ trivalent vertices,  whose endpoints (univalent vertices) lie on $D$; the orientation is such that each trivalent vertex has one outgoing and two ingoing edges. 
Surgery along a w-tree is defined recursively by the Expansion rules (E):
\[
\dessin{1.5cm}{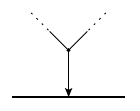}\stackrel{(E)}{=}\dessin{1.5cm}{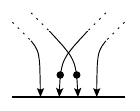}
\hspace{1cm}
\dessin{1.5cm}{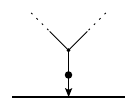}\stackrel{(E)}{=}\dessin{1.5cm}{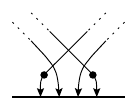}.
\]
In other words, a w-tree of degree $k$ encodes a $k$-fold commutator of w-arrows. A w-tree of degree $1$ is just a w-arrow. 
\medskip

Any welded diagram can be represented by the trivial diagram $\1$ together with a union of w-arrows or w-trees.  
Much like Habiro's clasper calculus, there is a collection of moves on w-arrows and w-trees which relate two such presentations having equivalent surgery results. 
For the purpose of this paper, we shall only need the following, see \cite[Lem.~5.9]{arrow}:
\begin{lem}[Inverse]\label{lem:inversea}
Two parallel w-trees of degree $k$ ($k\ge 1$) which only differ by a decoration $\bullet$ as shown below, yield a trivial surgery.
\begin{center}
   $\quad \quad$ \includegraphics[scale=0.8]{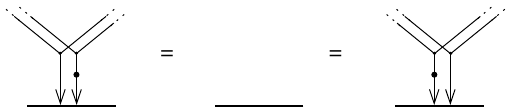}
\end{center}
\end{lem}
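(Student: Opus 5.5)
The statement to prove is the Inverse Lemma~\ref{lem:inversea}: two parallel w-trees of degree $k$, differing only by a $\bullet$ decoration, yield a trivial surgery.

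I would argue by induction on the degree $k$, working throughout in the welded diagram category with arrow calculus.

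\textbf{Base case $k=1$.} Surgery along a w-arrow inserts a classical crossing together with a virtual one near the head. A $\bullet$ on the arrow switches the sign of that classical crossing. Two parallel w-arrows differing only by $\bullet$ therefore produce, near the heads, a pair of opposite classical crossings. These crossings are separated only by virtual crossings coming from the parallel tails. Using the virtual Reidemeister moves and the Mixed move, I would slide the virtual crossings out of the way, so that the two classical crossings become adjacent. A Reidemeister~II move then cancels them, and the remaining virtual crossings cancel by virtual Reidemeister~II. This gives the trivial diagram.

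\textbf{Inductive step.} Let $T$ be a degree $k$ w-tree and $T^\bullet$ its parallel copy carrying one extra $\bullet$. I would first use the fact (to be recalled from \cite{arrow}) that a $\bullet$ can be moved to the outgoing edge adjacent to the head, possibly at the cost of decorations on the tails. So we may assume the $\bullet$ sits on the terminal edge.

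Next I apply the Expansion rule (E) at the trivalent vertex adjacent to the head. This writes surgery on $T$ as a commutator $[A,B]$, realized by four parallel copies of the two subtrees $A$ and $B$: two with and two without the $\bullet$, arranged in the commutator pattern. With the $\bullet$ on the terminal edge, surgery on $T^\bullet$ expands as the inverse commutator $[A,B]^{-1}=[B,A]$, i.e.\ the same copies in reversed order.

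Because $T$ and $T^\bullet$ are parallel, the eight subtrees appear in the order $A\,B\,\bar A\,\bar B\,\cdot\,B\,A\,\bar B\,\bar A$ along the parallel strands, where $\bar{\ }$ denotes the $\bullet$-decorated copy. The innermost adjacent pair $\bar B\,B$ consists of parallel subtrees of lower degree differing only by $\bullet$, so it cancels by induction. This exposes $\bar A\,A$, which cancels likewise, and so on until the whole word is trivial.

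\textbf{Main obstacle.} The delicate point is justifying that adjacent pairs really are \emph{parallel} in the diagrammatic sense after expansion, so that the inductive hypothesis applies. Each subtree has its own endpoints (tails) on $D$, and their heads are interleaved. So I need to check that the tails of consecutive copies can be made adjacent, using the tail commutation moves valid in welded theory. These moves are available because tails, which correspond to the OC-move side, commute freely, and the only constraints concern heads. Once this is checked, the cancellation proceeds as described.
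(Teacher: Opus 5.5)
Your argument is correct and is essentially the standard proof behind \cite[Lem.~5.9]{arrow}, which the paper quotes without reproving. That proof goes by induction on the degree: the w-arrow case comes from Reidemeister~II and virtual moves, and in degree $k$ one expands both trees by (E) at the vertex next to the head, so that the word $AB\bar A\bar B\cdot BA\bar B\bar A$ cancels from the middle by induction, with exact tails exchanges making each middle pair parallel. The one step to drop is your preliminary relocation of the $\bullet$: in the statement it already sits on the terminal edge, and moving a $\bullet$ across a trivalent vertex is not an exact move (it holds only up to higher-degree w-trees, compare the footnote in Section~\ref{sec:proofribbon}), so invoking it as an identity would be unjustified.
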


 For each $k\ge 1$, the $w_k$-equivalence is the equivalence relation on welded long knots denoted by $\stackrel{w_{k}}{\sim}$ and  generated by welded equivalence and surgery along w-trees of degree $l$ for $l\ge k$. 
This is analogous to the $C_k$-equivalence of clasper theory, and allows likewise for further flexibility in manipulating w-trees. 
In particular we have the following, see \cite[Cor.~7.13]{arrow}: 
\begin{lem}\label{lem:exchange}
Let $T$ and $T'$ be w-trees of degree $k$ and $k'$ for $\1$ ($k,k'\ge 1$). 
We can freely exchange the relative position of two adjacent endpoints of $T$ and $T'$, at the cost of adding w-trees of degree $\ge k+k'$. 
\end{lem}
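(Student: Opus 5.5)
This is \cite[Cor.~7.13]{arrow}. A self-contained argument would go by reducing to the degree $1$ case and then using the group-theoretic meaning of w-trees, as follows.

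\emph{Degree $1$.} First I would treat $k=k'=1$, i.e.\ two w-arrows with adjacent endpoints, and check the three possible configurations directly from the surgery definition and the welded moves.
\begin{itemize}
\item Two adjacent tails can be exchanged at no cost: after surgery they only involve virtual crossings and an overpassing strand, so the OC move (together with virtual Reidemeister moves) swaps them.
\item A head and a tail are exchanged at the cost of one w-tree of degree $2$. This is the basic Head/Tail exchange of arrow calculus, checked on the diagram using the Mixed move and Reidemeister III.
\item Two adjacent heads are exchanged at the cost of degree $2$ w-trees. Algebraically, a head conjugates the local Wirtinger generator by the meridian at the tail, and the two orders of conjugation differ by the commutator of the two tail meridians. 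That commutator is realized by a degree $2$ w-tree by the expansion rule (E).
\end{itemize}

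\emph{Interpretation of surgery.} Next I would formalize what surgery along a w-tree means. Using (E) recursively, a degree $k$ w-tree $T$ whose head lies on a strand acts by inserting a $k$-fold iterated commutator $c_T$ of meridians, which lies in $\gamma_k$ of the free group on the tails' meridians. Moving an endpoint of $T$ past an endpoint of $T'$ then changes the presentation by conjugating or reordering $c_T$ and $c_{T'}$. Two cases arise.
\begin{itemize}
\item \emph{Heads adjacent.} The discrepancy is $[c_T,c_{T'}]\in\gamma_{k+k'}$.
\item \emph{A tail is involved.} Expanding the tree containing that tail by (E) produces a family of parallel w-arrow endpoints, one for each occurrence of the corresponding letter in the iterated commutator. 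I would apply the degree $1$ exchange to each of these. Each application produces a degree $2$ correction attached in the middle of the expansion, and the resulting discrepancy is again a commutator of an element of $\gamma_k$ with an element of $\gamma_{k'}$, hence lies in $\gamma_{k+k'}$.
\end{itemize}

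\emph{Reassembly and the main obstacle.} The remaining step is to show that any such correction term in $\gamma_{k+k'}$ can be realized, up to $w_{k+k'}$-equivalence, by surgery along w-trees of degree $\ge k+k'$. The plan is to write the element as a product of basic iterated commutators of weight $\ge k+k'$, each of which is represented by a single w-tree via (E). Inverses are handled by the Inverse Lemma \ref{lem:inversea}, and the order of factors only matters up to higher degree, which can be absorbed.

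I expect this reassembly to be the main obstacle. The difficulty is making the passage between diagrams and group-theoretic commutators rigorous, in particular checking that the degree $2$ corrections created inside an expanded tree do recombine into trees of degree $\ge k+k'$. I would first try induction on $k+k'$: expand only the last trivalent vertex of $T$, apply the induction hypothesis to the two resulting lower-degree subtrees, and collapse back with (E). Each correction produced this way has degree at least $(k-1)+k'+1=k+k'$.
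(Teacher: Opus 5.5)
Your proposal is correct and takes the same route as the paper: the paper gives no argument of its own for this lemma and simply quotes \cite[Cor.~7.13]{arrow}, exactly as you do. Your self-contained sketch is therefore optional, and as you suspected it is incomplete; if you pursue it, note that expanding the head vertex of $T$ into subtrees $T_1,T_2$ of degrees $k_1+k_2=k$ gives, from the induction hypothesis, only corrections of degree $\ge k_1+k'$. The missing $k_2$ has to come from showing that the corrections attached to the copies of $T_1$ and of its $\bullet$-inverse recombine into a commutator with $T_2$, and your count $(k-1)+k'+1$ covers only the special case $k_2=1$.
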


Finally, recall that the normalized coefficient $\alpha_k$ of welded long knots ($k\ge 2$) is a $w_{k+1}$-equivalence invariants \cite{arrow}. 
\subsection{Proof of Theorem \ref{thm:2knots}}\label{sec:proofribbon}

By Section \ref{sec:Tube},  Theorem \ref{thm:2knots} is in fact a corollary of the following. 
\begin{thm}\label{thm:weldedknots}
Let $K$ be a welded long knot, and let $n$ be an integer ($n\ge 2$). 
There exists a welded long knot $J$ with virtualization number $1$ and such that $\alpha_k(K)=\alpha_k(J)$ for all $k\le n$. 
\end{thm}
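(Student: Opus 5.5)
We mimic the inductive strategy of Section \ref{sec:knot} (in its $k=1$ form, Remark \ref{rem:k=1}), replacing clasper calculus by arrow calculus. First we fix a reservoir: let $B$ be a single w-arrow for $\1$ whose surgery is trivial. Concretely, take a w-arrow with both endpoints on $\1$, tail and head adjacent with no other endpoint in between, so that surgery introduces one classical crossing which is removable by a Reidemeister I move (possibly combined with a virtual one). Then $\1_B=\1$, and any diagram of the form $\1_{B\cup E}$ with $E$ a union of $B$-trivial w-trees becomes trivial once $B$ is deleted. Here $B$-trivial means each tree of $E$ has an endpoint on $B$ and becomes removable once $B$ is deleted. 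Deleting $B$ amounts to one virtualization move, since surgery on $B$ creates exactly one classical crossing. Hence $\1_{B\cup E}$ has virtualization number $\le 1$. One also checks that it is $\le$ and not $0$ is irrelevant; to get exactly $1$, note that if $J$ were trivial we may connect-sum a virtualization-number-one knot with trivial $\alpha_k$ for $k\le n$, or simply observe nontriviality when $K$ is nontrivial.

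Next we prove by induction on $m\ge 1$ that $K\stackrel{w_{m+1}}{\sim}\1_{B\cup E_m}$, where $E_m$ is a union of $B$-trivial w-trees. For the base case, every welded long knot is $w_1$-equivalent to $\1$. For $m=1$ we need $K\stackrel{w_2}{\sim}\1$; this holds since degree-one w-arrows on a single component can be commuted (Lemma \ref{lem:exchange}) and cancelled. For the induction step, write $K=\1_{B\cup E_m\cup F}$ with $F$ a union of w-trees of degree $m+1$. Using Lemma \ref{lem:exchange}, we move the endpoints of $F$ until each tree of $F$ is isolated, giving $K\stackrel{w_{m+2}}{\sim}\1_{B\cup E_m}\cdot\prod \1_{T}$. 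Each factor $T$ is a degree $m+1$ w-tree on $\1$.

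The key step is to absorb each factor $T$ using $B$. We slide an endpoint (a tail) of $T$ so that it sits next to the head of $B$, and exchange positions using Lemma \ref{lem:exchange}. Exchanging the tail of $T$ with the head of $B$ produces a w-tree of degree $m+2$ attached to both, which is irrelevant. To realise $T$ itself, we instead use the expansion rule (E) backwards: a degree $m+1$ tree with one leg ending at $B$'s endpoint can be rewritten as $T$ being created by passing an endpoint of a degree-$m$ (or lower) tree $C$ across an endpoint of $B$. Here $C$ is a subtree attached to $B$, so $C$ is $B$-trivial. More precisely, we aim for the welded analogue of Lemma \ref{lem:tech}: $\1_{B\cup T}\stackrel{w_{m+2}}{\sim}\1_{B\cup C}$, where $C$ is a $B$-trivial w-tree obtained by grafting the remainder of $T$ onto $B$. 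We prove this by expanding the commutator structure via (E) and applying the Inverse Lemma \ref{lem:inversea} to cancel parallel pairs.

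Iterating over all factors completes the induction. Taking $m=n$ and using that $\alpha_k$ is a $w_{k+1}$-invariant, we get $\alpha_k(K)=\alpha_k(J)$ for $k\le n$ with $J=\1_{B\cup E_n}$. The main obstacle is this welded version of Lemma \ref{lem:tech}, namely checking carefully that the head/tail orientation conventions allow the commutator $[B,C]$ to reproduce $T$ up to higher degree and inverses (via $\bullet$ decorations). If a direct argument fails, we would use two reservoir arrows $B,B^{-1}$ with opposite decorations, but this would spoil the bound $1$, so the single-arrow approach should be pushed through.
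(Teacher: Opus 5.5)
Your overall architecture matches the paper's. You use a single w-arrow reservoir with trivial surgery (the paper's $A$), induct on the degree, separate the new degree-$(m+1)$ trees with Lemma \ref{lem:exchange}, absorb each of them into the reservoir at the cost of reservoir-trivial trees, and observe that deleting the reservoir is one virtualization. \textbf{The gap is that the one step carrying the content is only announced, not proven.} That step is absorbing a degree-$(m+1)$ tree $T$ into $B$. You write ``we aim for'' it, and you flag it yourself as the main obstacle, with a fallback that would destroy the bound.

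As described, it also does not transfer from clasper theory. W-trees have their endpoints on the diagram, not on other w-trees. So there is no ``grafting of the remainder of $T$ onto $B$'' analogous to the leaf of $C$ in Lemma \ref{lem:tech}. What you must control is which tree is created when an endpoint is pushed across the head of $B$. Lemma \ref{lem:exchange}, as quoted, only says that the error has degree $\ge k+k'$; it does not say what the error is. Relatedly, the by-product is in general not a single tree that is trivial on its own. So $B$-triviality has to be a condition on the whole union ($\1_{E}=\1$), as in the paper, not tree by tree.

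The paper avoids treating arbitrary trees. By \cite[Lem.~8.9]{arrow}, $\1_{W_k}\stackrel{w_{n+1}}{\sim}(Y_k)^x\cdot\1_{W_{k+1}}$ for one specific degree-$k$ tree $Y_k$ and some $x\in\mathbb{Z}$. Hence only the single configuration $Y_k\cup A$ of Figure \ref{fig:A} has to be checked. There, one expansion (E) deletes $Y_k$ at the expense of a pair of w-trees which cancel by the Inverse Lemma \ref{lem:inversea} once $A$ is removed.

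If you want to keep your route for arbitrary $T$, you need an explicit absorption argument. One natural way, which I have not checked in detail:
\begin{enumerate}
\item Pick two tails of $T$ at a common trivalent vertex; these exist in degree $\ge 2$.
\item Collapse them to one tail, giving a degree-$m$ tree $S$.
\item Insert a parallel cancelling pair $S,\bar S$ next to $B$.
\item Push the collapsed tail of one copy across the head of $B$.
\end{enumerate}
By the explicit head--tail exchange formula of \cite{arrow}, the tree created in the last step should be $S$ with that tail replaced by a fork, one of whose tails sits at the tail of $B$. Tails commute freely on a single strand, and the sign can be adjusted by the choice of copy (and AS-type relations). So this tree can be made to cancel $T$, leaving a pair that cancels once $B$ is deleted. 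Either this argument or the paper's reduction to $Y_k$ is what your proposal is missing.
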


\begin{proof}
Consider the w-arrow $A$ shown on the left-hand side of Figure \ref{fig:A}.  
 \begin{figure}[!h]
  \includegraphics[scale=0.75]{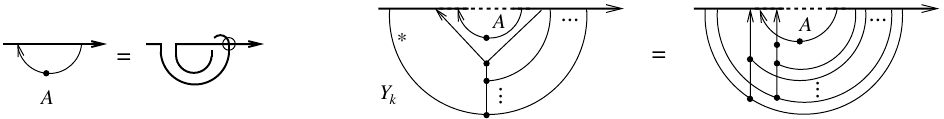}
  \caption{The w-arrow $A$ (left), and the deletion process for $Y_{p+1}$ (right).}\label{fig:A}
 \end{figure}
 
We  prove by induction that for all $k\ge 1$ we have 
$$ K\stackrel{w_{n+1}}{\sim} \1_{A\cup C_k}\cdot \1_{W_k}, $$
where $W_k$ is a union of w-trees of degree $\ge k$ and $C_k$ is an $A$-trivial union of w-trees, that is, a union of w-trees such that $\1_{C_k}=\1$. 

The case $k=1$ is clear. As Figure \ref{fig:A} illustrates, we have $\1_{A}=\1$, and since the welded (long) knot $K$ is obtained by surgery on $\1$ along a union $W_1$ of w-arrows, see \cite[Prop.~4.2]{arrow} we have  $K=\1_{A}\cdot \1_{W_1}$.
 
Now assume that $K\stackrel{w_{n+1}}{\sim} \1_{A\cup C_k}\cdot \1_{W_k}$ for some $A$-trivial union $C_k$ of w-trees and some  union $W_k$ of w-trees of degree $\ge k$. 
By \cite[Lem.~8.9]{arrow}, we have 
$\1_{W_k}\stackrel{w_{n+1}}{\sim} (Y_{k})^x\cdot \1_{W_{k+1}}$ where $Y_{k}$ is the $w_{k}$-tree illustrated in Figure \ref{fig:A},\footnote{More precisely, the w-tree $Y_{k}$ used here differs from $L_k$ in  \cite[Lem.~8.9]{arrow} only by a number of $\bullet$; hence by \cite[Lem.~7.7]{arrow} and the involutivity of the $\bullet$ decorations we have $\1_{Y_{k}}\stackrel{w_{k+1}}{\sim} (\1_{L_k})^{(-1)^k}$.} $x\in \mathbb{Z}$ and $W_{k+1}$ is some union of w-trees of degree $\ge k+1$. 
Therefore 
\begin{equation}\label{eq:w}
 K\stackrel{w_{n+1}}{\sim} \1_{A\cup C_k}\cdot (Y_{k})^x\cdot \1_{W_{k+1}}. 
 \end{equation}
Suppose $x>0$ (the case $x<0$ is strictly similar). Using Lemma \ref{lem:exchange}, we can deform a copy of $Y_{k}$ in (\ref{eq:w})  so that $Y_{k}\cup A$ is exactly as show on the right-hand side of Figure \ref{fig:A}. Applying an Expansion move (E) gives the equality shown in the figure, deleting $Y_{k}$ at the expense of a pair of w-trees which is $A$-trivial by Lemma \ref{lem:inversea}. 
Applying this process repeatedly to all factors $Y_{k}$ in (\ref{eq:w}), we obtain that 
$$ K\stackrel{w_{n+1}}{\sim} \1_{A\cup C_{k+1}}\cdot \1_{W_{k+1}},  $$
as desired, where $C_{k+1}$ denotes the union of $C$ and $\vert x\vert$ pairs of $A$-trivial w-trees. 
This concludes the induction. 

It remains to observe that the claim we just proved by induction implies Theorem \ref{thm:weldedknots}.

On one hand, deleting $A$ gives the welded long knot $\1_{C_n}=\1$, so that the welded long knot $J_n:=\1_{A\cup C_n}$ satisfies $v(J_p)=1$.
On the other hand, we have $J_n\stackrel{w_{n+1}}{\sim} K$, and the result follows since the Alexander coefficients $\alpha_k$ with $k\le n$ are $w_{n}$-equivalence invariants.
\end{proof}

\appendix
\section{Proof of Lemma \ref{lem:tech}}\label{sec:A}

This appendix provides the proof of the technical Lemma \ref{lem:tech}. Although the argument essentially boils down to Habiro's Zip-construction \cite[\S~3.3]{Habiro}, the statement we aim at here is more precise.

In \cite[Prop.~2.7]{Habiro}, Habiro gives a list of 12 moves relating claspers with equivalent surgery results.   
In this appendix we will freely use these \emph{Habiro moves}, which are essentially derived from Kirby calculus, by referring to their numbering in \cite{Habiro}. These moves in particular involve an additional ingredient for claspers, namely \emph{boxes}.  
A box of a clasper is a disk with three edges attached but, unlike for nodes, one incident edge is distinguished from the other two; this distinction is done by drawing boxes as rectangles, see \cite[Fig.~5 and 6]{Habiro}.
\medskip

We can now proceed with the proof of Lemma \ref{lem:tech}. 
Figure \ref{fig:prooftech} summarizes each step of the proof in the case $k=3$ and $m=5$. 
 \begin{figure}[!h]
  \includegraphics[scale=1]{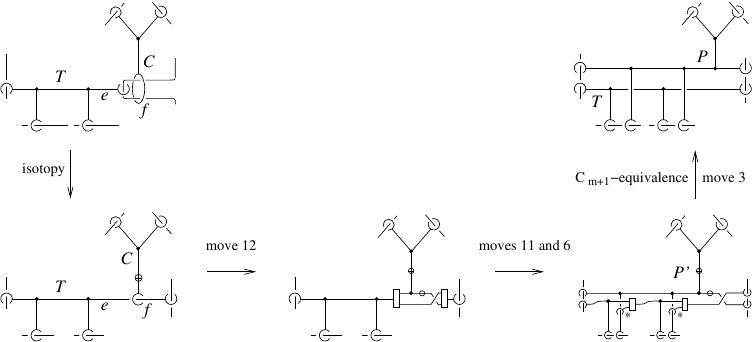}
  \caption{Proof of Lemma \ref{lem:tech}.}\label{fig:prooftech}
 \end{figure}
 
Consider the union $T\cup C$ as in the top left of Figure \ref{fig:prooftech}, and isotope the leaf $f$ of $C$ so that it is disjoint from $\1$, but intersect instead the edge $e$ of $T$ at a single point (bottom left of the figure). This configuration allows us to apply Habiro's move 12 at this leaf, introducing two boxes (bottom center). 
We can now \lq push' iteratively these boxes across nodes and leaves by using moves 12 and 6, respectively, so as to obtain a $C_m$-tree $P'$, which differs from $P$ only by a pair of half-twists, and whose edges may be linked by small leaves, namely the $\ast$-marked leaves shown in the bottom right of the figure. 
Now, using Lemma \ref{lem:calculus} we can unlink the edges from these $\ast$-leaves up to $C_{m+1}$-equivalence; the $\ast$-leaves being now trivial, we can apply move 3 to replace all boxes by edges. Furthermore, applying the Twist Lemma \ref{lem:twist} allows us to remove the two half-twists on $P'$  up to $C_{m+1}$-equivalence. The result is precisely the union $T\cup P$ (top right of the figure), which completes the proof.
%

\bibliographystyle{abbrv}
\bibliography{References}

\end{document}